\documentclass[a4paper,12pt,reqno]{amsart}
\pagestyle{plain}
\usepackage{fullpage}
\usepackage{mymacros}
\usepackage{oitp}
\usepackage[all]{xy}
\usepackage{stmaryrd,mdwlist}
\usepackage{hyperref}
\usepackage{breakurl}

%
%

\title[Derived equivalence and Grothendieck ring of varieties]{Derived equivalence and Grothendieck ring of varieties: the case of K3 surfaces of degree 12 and abelian varieties}

\date{}
\author[A.~Ito]{Atsushi Ito}
\address{Graduate School of Mathematics,
Nagoya University,
Furocho, Chikusaku, Nagoya, 464-8602, Japan}
\email{atsushi.ito@math.nagoya-u.ac.jp}

\author[M.~Miura]{Makoto Miura}
\address{
Korea Institute for Advanced Study,
85 Hoegiro,
Dongdaemun-gu,
Seoul,
130-722,
Republic of Korea.
}
\email{miura@kias.re.kr}

\author{Shinnosuke Okawa}
\address{
Department of Mathematics,
Graduate School of Science,
Osaka University,
Machikaneyama 1-1,
Toyonaka,
Osaka,
560-0043,
Japan.
}
\email{okawa@math.sci.osaka-u.ac.jp}

\author[K.~Ueda]{Kazushi Ueda}
\address{
Graduate School of Mathematical Sciences,
The University of Tokyo,
3-8-1 Komaba,
Meguro-ku,
Tokyo,
153-8914,
Japan.}
\email{kazushi@ms.u-tokyo.ac.jp}

%
%

\begin{document}

\maketitle

\begin{abstract}
In this paper,
we discuss the problem
of whether
the difference $[X]-[Y]$
of the classes
of a Fourier--Mukai pair $(X, Y)$
of smooth projective varieties
in the Grothendieck ring of varieties
is annihilated by some power
of the class $\mathbb{L} = [ \mathbb{A}^1 ]$
of the affine line.
We give an affirmative answer for Fourier--Mukai pairs of very general K3 surfaces of degree 12.
On the other hand,
we prove that in each dimension greater than one,
there exists an abelian variety such that
the difference with its dual is not annihilated by any power of $\mathbb{L}$,
thereby giving a negative answer to the problem.
We also discuss
variations of the problem.


\end{abstract}

\tableofcontents

\section{Introduction}
Let
$
 X
$
and
$
 Y
$
be a pair of smooth projective varieties
(or more generally smooth and proper Deligne--Mumford stacks)
over a field $\bfk$.
We say that $X$ is \emph{D-equivalent} to $Y$
if the bounded derived category of coherent sheaves
$
 D ( X ) \coloneqq D ^{ b } \coh X
$
is equivalent to
$
 D ( Y )
$
as a $\bfk$-linear triangulated category
(we also say that $Y$ is a \emph{Fourier--Mukai partner} of $X$).

It is shown in the pioneering paper \cite{Mukai_DAPS} that
an abelian variety is D-equivalent to its dual, meaning that non-birational varieties could be D-equivalent.
The following natural question arises from this observation.

\begin{question}\label{q:basic_question}
Which piece of information of a variety
(or more generally Deligne--Mumford stack)
does the derived category have?
In other words,
if $X$ and $Y$ are D-equivalent, which invariants of $X$ and $Y$ do coincide?
\end{question}

It follows from the uniqueness of the Serre functor
\cite{MR1039961}
that $X$ and $Y$ have the same dimension and isomorphic (anti-)canonical rings.
%
%
The Hochschild cohomology ring,
whose graded pieces are
the group of natural transformations
from the identity functor to the shift functor,
is also derived invariant.
When the characteristic of the base field $\bfk$ is zero, combined with the Hochschild--Kostant--Rosenberg isomorphism
\cite{MR0142598,MR1390671,MR1940241,MR2141853,MR2955193},
this gives the following partial coincidence of the Hodge numbers for any integer $i \in \bZ$;
\begin{align} \label{eq:vertical_sum}
 \sum _{ q - p = i } h ^{ p, q } ( X ) = \sum _{ q - p = i } h ^{ p, q } ( Y ).
\end{align}
Another deep result is the coincidence of Chow motives with rational coefficients up to Tate twists
\cite{MR2225203, MR2196100, MR2641191, MR3108695}.

On the other hand, Popa and Schnell \cite{MR2839458} proved in characteristic zero
that the identity components of the Picard schemes of $X$ and $Y$ are isogenous.
As a corollary, it follows that the Hodge numbers of $X$ and $Y$
coincide for all $(p, q)$ under the assumption that $\dim X ( =\dim Y ) \le 3$.
Honigs \cite{MR3750214} also showed that, if $\bfk = \bF _{ q }$,
the Hasse--Weil zeta functions coincide again in dimension 3 or less.

Recall that smooth projective varieties $X$ and $Y$ are \emph{K-equivalent}
if there is a diagram
\begin{align}
\begin{gathered}
 \xymatrix{ & W \ar[dl] _{ p } \ar[dr] ^{ q } &\\
X & & Y}
\end{gathered}
\end{align}
where $W$ is a normal variety and
$p, q$ are birational projective morphisms satisfying
$
 p ^{ * } K _{ X } \simeq q ^{ * } K _{ Y }.
$
It is conjectured and has been shown in some cases that K-equivalent varieties
(and stacks) are D-equivalent.
Conversely, it is also conjectured that birational and D-equivalent smooth projective varieties of non-negative
Kodaira dimension are K-equivalent. 
For the details and the history of DK hypothesis, see the recent survey article \cite{MR3838122} and references therein.

Combined with the arguments above, it is natural to ask if K-equivalent varieties have the same Hodge numbers
when $\bfk$ has characteristic zero. In fact,
this is known to be the case in its full generality. One way to prove this is to utilize the method of
\emph{motivic integration}, and here the \emph{Grothendieck ring of varieties} comes into the picture.

The \emph{Grothendieck ring of varieties} over a field
$
 \bfk
$,
which will be denoted by
$
 K _{ 0 } \lb \Var / \bfk \rb
$
in this paper, is the quotient of the free abelian group generated by the set of
isomorphism classes of schemes of finite type over
$
 \bfk
$
modulo the relations
\begin{equation}
 [ X ] = [ X \setminus Z ] + [ Z ]
\end{equation}
for closed embeddings
$
 Z \subset X.
$
Multiplication in
$
 K _{ 0 } \lb \Var / \bfk \rb
$
is defined by the Cartesian product,
which is easily seen to be associative, commutative, and unital with
$
 1 = [ \Spec \bfk ].
$

When $\bfk$ is of characteristic zero, it is shown by means of the motivic integration of Kontsevich \cite{MR1672108,MR1664700} that classes of smooth projective K-equivalent varieties coincide in the \emph{completed Grothendieck ring of varieties}.
This is a completion of the \emph{localized Grothendieck ring}
$
 K _{ 0 } \lb \Var / \bfk \rb [ \bL ^{ - 1 } ]
$,
where
$
 \bL = [ \bA ^{ 1 } ]
$
is the class of the affine line (see \pref{df:Grothendieck_ring}). From this result, by taking (an extension of)
the Hodge--Deligne polynomial, one immediately obtains the coincidence of
the Hodge numbers for K-equivalent varieties.

In view of this result, it is natural to ask
if D-equivalent varieties have the same class
in the completed (or localized) Grothendieck ring of varieties.

\begin{problem}[{\cite[Problem 1.3]{1612.08497v1}}]
\label{pb:naive}
Let
$
 \lb X, Y \rb
$
be a Fourier--Mukai pair. Does the equality
\begin{align} \label{eq:annihilate}
 ([X] - [Y]) \cdot \bL^k = 0 \in K _{ 0 } \lb \Var / \bfk \rb
\end{align}
hold for a non-negative integer $k$?
\end{problem}
\noindent For example, one can take $k=0$ if $X$ and $Y$ are
related by an elementary flop appearing in \cite{Bondal-Orlov_semiorthogonal}.

Recently a number of positive results
have been obtained by several groups of people.

\begin{enumerate}
\item
Borisov \cite{Borisov:2014aa}
show that the Pfaffian--Grassmannian pairs $(X, Y)$ of Calabi--Yau 3-folds
\cite{MR1775415}
satisfy
\begin{align} \label{eq:Borisov}
 \lb [X] - [Y] \rb (\bL^2-1)(\bL-1) \bL^7 = 0
\end{align}
in
$
 K _{ 0 } \lb \Var / \bfk \rb
$,
giving a first counter-example to 
the cancellation problem,
which asks the injectivity of the homomorphism
$
 K _{ 0 } \lb \Var / \bfk \rb \to K _{ 0 } \lb \Var / \bfk \rb [ \bL ^{ - 1 } ]
$.
\pref{eq:Borisov} is subsequently refined by Martin \cite{MR3535349} to
\begin{equation} \label{eq:Martin}
 \lb [ X ] - [ Y ] \rb \cdot \bL ^{ 6 } = 0.
\end{equation}

\item
It is shown in \cite{MR3912058} that for pairs $(X', Y')$ of smooth Calabi--Yau 3-folds
obtained as certain degenerations of $X$ and $Y$,
one has
\begin{align}
 \lb [X'] - [Y'] \rb \cdot \bL = 0.
\end{align}
\suspend{enumerate}
Both the Pfaffian--Grassmannian pairs $(X, Y)$ of Calabi--Yau 3-folds
and their degenerations $(X', Y')$ are Fourier--Mukai partners
by \cite{MR2475813,0610957} and \cite{1611.08386}
(see also \cite{MR3959275}).
\resume{enumerate}

\item
In \cite{1612.07193v1}, Kuznetsov and Shinder studied the Fourier--Mukai partners of
K3 surfaces of degree $8$ and $2$.
A K3 surface $X$ is said to be of degree $d$
if there is an ample line bundle $L$ on $X$
satisfying $L \cdot L = d$.
They prove that there are such pairs
$
 \lb X '', Y '' \rb
$
satisfying
\begin{align}
 \lb [X ''] - [Y ''] \rb \cdot \bL = 0.
\end{align}

\item
In \cite{Borisov:2017aa, Ottem:2017aa},
it is shown that a generic pair of
the so-called $\text{GPK}^3$ Calabi--Yau 3-folds
$
 \lb X ''', Y ''' \rb
$
are D-equivalent, non-birational, and satisfy the equality
\begin{align}
 \lb [X '''] - [Y '''] \rb \cdot \bL ^{ 4 } = 0.
\end{align}
See also \cite{2017arXiv171110231K} for further study in this direction.

\item
Examples of pairs of K3 surfaces of degree 2 are given in \cite[Proposition 4.1]{2017arXiv171206958K}.

\item
A series of examples consisting of hyperk\"ahler manifolds of $K3[n]$-type is given in \cite{2018arXiv180109385O} (see also \cite{Meachan_2019} for finer results on birational (in)equivalence of those manifolds).

\item
Examples of Calabi--Yau 5-folds are given in \cite{MR3936623}.

\item
Further examples are given in \cite{2019arXiv190701335S}.
\end{enumerate}

\pref{pb:naive} was stated as \cite[Conjecture 1.6]{1612.07193v1}
around the same time as \cite{1612.08497v1}, the first preprint version of this paper.

One of the purposes of this paper is,
as an application of the geometry of equivariant vector bundles on homogeneous spaces
of type $D$, to prove that a general Fourier--Mukai pair of K3 surfaces of degree 12
gives another example
of an affirmative answer to \pref{pb:naive}.

\begin{theorem} \label{th:main}
Let $X$ be a general K3 surface of degree 12 over $\bC$.
Then there exists a non-isomorphic Fourier--Mukai partner $Y$ of $X$
satisfying
\begin{align}
 \lb [X] - [Y] \rb \cdot \bL^3 = 0.
\end{align}
\end{theorem}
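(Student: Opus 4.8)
The plan is to realize $X$ and its Fourier--Mukai partner $Y$ as the two zero loci of a single equivariant vector bundle on a pair of homogeneous spaces related by projective duality, in the spirit of the ``Grassmannian--Pfaffian'' and related constructions, and then to deduce the relation in $K_0(\Var/\bC)$ from the geometry of the ambient spaces rather than from the K3 surfaces directly. Concretely, a general K3 surface of degree $12$ admits a well-known model (due to Mukai) as a linear section, or a zero locus of a natural bundle, inside a homogeneous space of type $D$ — the orthogonal Grassmannian $\OG(V_{10})$ or an associated variety — while the partner $Y$ arises from the ``dual'' homogeneous space. First I would recall (or set up) the precise Mukai-type model, identifying $X = Z(s) \subset G_1$ and $Y = Z(s^\vee) \subset G_2$ for a section $s$ of an equivariant bundle $\cE_1$ on $G_1$ and the mirror section $s^\vee$ of $\cE_2$ on $G_2$; the Fourier--Mukai equivalence $D(X) \simeq D(Y)$ should then follow from a Homological Projective Duality statement for the pair $(G_1, \cE_1)$, $(G_2, \cE_2)$, or be quoted from the literature on K3 surfaces of degree $12$.

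Next I would run the ``cut-and-paste'' argument on the incidence correspondence. Let $\Hat{G} \subset G_1 \times \bP(\text{space of sections})$ be the universal family of hyperplane (or sub-bundle) sections, and stratify the base projective space $\bP^N$ by the corank of the section: the open stratum where the section is transverse contributes a $\bP^{N-1}$-worth of smooth K3 surfaces, all with the same class $[X]$ by the generic nature of the hypothesis, and the universal family over this stratum is Zariski-locally trivial up to a power of $\bL$. Writing out $[\Hat{G}]$ in two ways — fibered over $G_1$ (a projective bundle, hence $[\Hat{G}] = [G_1]\cdot[\bP^{N-k-1}]$ for appropriate $k$) and fibered over $\bP^N$ (decomposed by corank strata) — and doing the same for the dual incidence variety $\Hat{G}^\vee \subset G_2 \times (\bP^N)^\vee$, the two expressions can be compared. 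The key input is that $G_1$ and $G_2$, and all the non-generic corank strata, have classes that are polynomials in $\bL$ (they are rational homogeneous or are built from Schubert-type cells), so that all terms except $[X]\bL^a$ and $[Y]\bL^b$ cancel, leaving $\bigl([X] - [Y]\bigr)\cdot \bL^c = 0$ for some explicit $c$.

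The main obstacle, I expect, is controlling the non-transverse strata precisely enough to pin down the exact power $\bL^3$ (as opposed to merely \emph{some} power of $\bL$): one must show that for a \emph{general} K3 of degree $12$ the relevant discriminant loci in $\bP^N$ and $(\bP^N)^\vee$ have the same class after multiplying by a controlled power of $\bL$, which typically requires understanding the singular sections (sections of corank one) and their resolutions — here is where the detailed representation theory of the type $D$ homogeneous space, its Schubert cells, and the geometry of the degeneracy loci of the equivariant bundle enters, and where an unexpected mismatch could in principle force a higher power of $\bL$. A secondary technical point is to verify that the universal smooth K3 family is genuinely piecewise trivial over the transverse stratum (so that its class is literally $[X]$ times the class of the base), which should follow from a versality/local-triviality argument for the parameter space of degree $12$ K3s, but must be stated carefully. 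Once these strata are understood, assembling the identity and reading off $c = 3$ is a bookkeeping computation with Poincaré polynomials of the relevant homogeneous spaces.
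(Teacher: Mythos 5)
Your overall framing is right — type $D$ homogeneous spaces, Mukai's model for degree-$12$ K3 surfaces, and a ``compute the class of an incidence variety in two ways'' cancellation — but the incidence variety you propose is the wrong one, and the obstacles you correctly flag are exactly why the paper does not use it. You take $\widehat{G} \subset G_1 \times \bP(\text{sections})$, the universal family over the full projective space of sections, and propose to stratify $\bP^N$ by corank. This is the Borisov-style argument; in the Pfaffian--Grassmannian case it produced the relation $([X]-[Y])(\bL^2-1)(\bL-1)\bL^7 = 0$, with extra factors coming precisely from the discriminant strata you worry about, and Martin's later improvement to $\bL^6$ already required replacing that big correspondence with a smaller one. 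For degree-$12$ K3s there is no reason to expect your route to give exactly $\bL^3$, nor that ``all terms except $[X]\bL^a$ and $[Y]\bL^b$ cancel'' — that claim is false in general for the universal-family correspondence.

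What the paper actually does is far more economical and sidesteps every obstacle you list. Take $F_1 \cong F_2 \cong \OG(m,V_0\oplus V_0^\vee)$ (the two components), and the flag variety $F_{12} = \OG(m-1,V_0\oplus V_0^\vee)$, which is simultaneously a $\bP^{m-1}$-bundle over $F_1$ and over $F_2$ via $p_1,p_2$. A single general section $s \in H^0(\cO_{F_{12}}(1,1))$ has zero locus $D \subset F_{12}$ and pushes forward to sections cutting out $X \subset F_1$ and $Y \subset F_2$. The key lemma is that $\pi_i = p_i|_D$ is a Zariski-locally-trivial $\bP^{m-2}$-bundle over $F_i \setminus Z(p_{i*}s)$ and a $\bP^{m-1}$-bundle over $Z(p_{i*}s)$. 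Writing $[D]$ in two ways over $F_1$ and $F_2$ and using $F_1 \cong F_2$ gives
\begin{align}
([X]-[Y])\bigl([\bP^{m-1}]-[\bP^{m-2}]\bigr) = ([X]-[Y])\,\bL^{m-1} = 0,
\end{align}
and $m=4$ gives $\bL^3$ on the nose. There is no parameter-space stratification, no discriminant locus, and no versality issue: the ``strata'' are just $F_1\setminus X$ and $X$, detected by the jump in fiber dimension of $\pi_1$, and local triviality is the elementary observation that the evaluation-at-the-section short exact sequence splits Zariski-locally off $X$.

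The remaining gap in your proposal is that you only gesture at why $Y$ is a Fourier--Mukai partner (``should follow from HPD or be quoted''). The paper proves this by a separate chain of identifications: Sections 3 and 4 show that $X$ is projectively equivalent to a codimension-$8$ linear section $\OG(5,10)\cap L$ (via a Tits-transform computation), that $Y$ is the orthogonal linear section $\OG(5,10)\cap L^\perp$, and then invoke Mukai's description of $\OG(5,10)\cap L^\perp$ as the moduli space $\cM_X(2,\cO_X(1),3)$, which supplies both the derived equivalence and (for Picard rank one) non-isomorphism. These identifications are not automatic and require the representation-theoretic content of \pref{pr:FPH} and \pref{lm:comb}.
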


\begin{remark}
After the completion of the first draft of this paper,
Brendan Hassett and Kuan-Wen Lai proved in \cite[Theorem 4.1]{Hassett:2016aa}
the stronger equality
\begin{align}
 \lb [X] - [Y] \rb \cdot \bL = 0 \in K _{ 0 } \lb \Var / \bC \rb,
\end{align}
by a completely different method based on Cremona transformations of $\bP^4$.
\end{remark}

It follows from
\cite[Proposition 1.10]{MR1892313}
(see also \cite[Corollary 2.7.4]{MR2096145})
that the number of isomorphism classes of Fourier--Mukai partners of a K3 surface over $\bC$
with Picard number 1 and degree 12 is 2.
Hence \pref{th:main} and \cite[Theorem 4.1]{Hassett:2016aa} gives an affirmative answer to \pref{pb:naive} for very general K3 surfaces of degree 12 over $\bC$.


Our second purpose, however, is to explain that there are examples which negatively answer to \pref{pb:naive}.

\begin{theorem}
 \label{th:DLAV}
If
$
 A
$
is a complex abelian variety which is not isomorphic to its dual
$
 \Ahat
$, then
$
 [ A ] \neq [ \Ahat ]
$
in the completed Grothendieck ring of varieties
$\Khat _{ 0 } \lb \Var / \bC \rb$.
In particular, for any integer $g \ge 2$ there exists a pair of non-isomorphic complex abelian $g$-folds which are D-equivalent but have distinct classes in $\Khat _{ 0 } \lb \Var / \bC \rb$.
\end{theorem}

\noindent The same example is also discovered independently in \cite[Theorem 3.1]{1707.08997}.
Our proof uses \pref{pr:Ekedahl2} ($=$\cite[Proposition 3.6]{Ekedahl:2009aa}),
which is based on the solution of Tate isogeny theorem by Faltings. In \cite{1707.08997}, this part is replaced with more elementary categorical arguments.

In \pref{cr:nonDL2}, we explain that the pair of Hilbert schemes of points
$
 \lb A ^{ [ n ] }, B ^{ [ n ] } \rb
$,
where
$
 \lb A, B \rb
$
is a pair of abelian surfaces as in \pref{th:DLAV}, is another example of a negative answer to \pref{pb:naive}

Because of these negative examples, Kuznetsov and Shinder modified their conjecture
in the published version
\cite[Conjecture 1.6]{MR3848025}
by adding the extra assumption that $X$ and $Y$ should be simply connected.
However, it is desirable to modify \pref{pb:naive} in such a way that it still has meaningful implications such as
the coincidence of the Hodge numbers over a field of characteristic zero and that of the number of rational points over a finite field, without adding extra assumptions on the pair
$
 ( X, Y )
$.
In \pref{sc:modifications},
we raise problems in this direction.

Another interesting problem is to extend the whole picture
in such a way that not only equivalences but also admissible embeddings of triangulated categories are
taken into account.
Let $\Gamma_\bfk$ be the ring
defined in \cite[Definition 8.1]{MR2051435}
as the quotient of the free abelian group
generated by quasi-equivalence classes
of enhanced (=pretriangulated dg)
bounded derived categories $\scrD ( X )$
of smooth complex projective varieties $X$
by the relations
\begin{align}
 [\scrD ( X )] = [\scrD ( Y_1 )] + \cdots + [\scrD ( Y_n )]
\end{align}
for semiorthogonal decompositions
\begin{align}
 \scrD ( X ) = \la \scrB_1, \ldots, \scrB_n \ra
\end{align}
with $\scrD ( Y_i ) \simeq \scrB_i$ for $i=1, \ldots, n$.
Multiplication in $\Gamma_\bfk$ is defined by the tensor product
of dg categories.
It is shown in \cite[Section 8]{MR2051435}
that there exists a motivic measure
\begin{align}
 K_0( \Var / \bfk ) \to \Gamma_\bfk
\end{align}
sending the class $[X]$ of a smooth projective variety
to $[\scrD ( X )]$,
which descends to a ring homomorphism
\begin{align} \label{eq:BLL}
 K_0( \Var / \bfk )/(\bL-1) \to \Gamma_\bfk.
\end{align}
\pref{pb:naive} is closely related to the problem,
given implicitly at the end of \cite[Section 8]{MR2051435},
of asking how close to being injective
the map \pref{eq:BLL} is.

This paper is organized as follows:
In \pref{sc:OG48},
we give pairs $(X, Y)$ of Calabi--Yau manifolds
defined as zeros of sections of equivariant vector bundles
on homogeneous spaces of $\Spin(2 m)$,
and prove the equality \eqref{eq:XYL} in the Grothendieck ring of varieties.
By specializing to $m=4$,
we obtain pairs of K3 surfaces of degree 12.
In \pref{sc:K3_in_OG(5,10)},
we show that the K3 surfaces can be also described
as linear sections of a homogeneous space $\OG(5,10)$
of $\Spin(10)$.
In \pref{sc:PD},
we use results of Mukai
to complete the proof of \pref{th:main}.
In \pref{sc:K3_surfaces_in_G26},
we give a formula in the Grothendieck ring of varieties which describes the class of Pfaffian cubic 4-folds by the class of the associated K3 surfaces and $\bL$. This is compatible with the homomorphism \eqref{eq:BLL},
in that the terms in the formula are sent to the SOD summands.
In \pref{sc:AV},
we discuss negative answers to \pref{pb:naive}.
In \pref{sc:modifications}, we discuss variations of \pref{pb:naive}.

\subsection*{Note added}
After the completion of \pref{sc:modifications} and before it was made public, the preprint \cite{2019arXiv191004733B} appeared on the arXiv, which has some overlap with \pref{sc:modifications}.
In particular, our \pref{pr:conjecture in characteristic p implies conjecture in characteristic 0} is essentially the same as \cite[Theorem 1.6]{2019arXiv191004733B}.
Compared to us, the author of \cite{2019arXiv191004733B} seems more concentrated on the relationship between \(D\)-equivalence and Chow motives (see \cite[Propositions 1.7, 1.8]{2019arXiv191004733B}).

\subsection*{Notation and conventions}
Throughout the paper, we use the dual of the Grothendieck's convention for projectivization. Namely, for a locally free sheaf \( \cF \) on a scheme \(X\), we write \( \bP _{ X } ( \cF )\) to mean \( \Proj _{ X } \Sym \cF ^{ \vee }\). In particular, for \( X = \Spec \bfk \) and \( V = \cF \), the \( \bfk \)-rational points of the scheme \( \bP ( V ) = G( 1, V ) \) represent lines of \( V \).

\subsection*{Acknowledgements}
We thank Kenji Hashimoto and Daisuke Inoue for collaboration at an early stage of this work;
this note is originally conceived as a joint project with them.
We also thank
Genki Ouchi for \pref{rm:Ouchi} and the reference
\cite{MR2051435},
and
Kota Yoshioka for the reference \cite{MR1987738}.
We also thank Yujiro Kawamata, Keiji Oguiso,
Evgeny Shinder, Hokuto Uehara, and Takehiko Yasuda for useful discussions.
We also thank the anonymous referees for reading the manuscript carefully
and suggesting a number of improvements.
A.~I.~was supported by
Grants-in-Aid for Scientific Research (14J01881,17K14162).
M.~M.~was supported by Korea Institute for Advanced Study.
S.~O.~was partially supported by Grants-in-Aid for Scientific Research
(16H05994,
16K13746,
16H02141,
16K13743,
16K13755,
16H06337,
18H01120)
and the Inamori Foundation.
K.~U.~was partially supported by Grants-in-Aid for Scientific Research
(24740043,
15KT0105,
16K13743,
16H03930).

\section{K3 surfaces in $\OG(4,8)$}
 \label{sc:OG48}

Let $V_0$ be a vector space of dimension $m$, so that
$
 \Vnat \coloneqq V_0 \oplus V_0^\dual
$
is a vector space of dimension
$
 2m
$
equipped with the natural non-degenerate bilinear form
\begin{align}
 \pair{v+\check{v}}{v'+\check{v}'}
  = \check{v}(v') + \check{v}'(v).
\end{align}
The \emph{spin group} $G \coloneqq G _{ m } \coloneqq \Spin(\Vnat)$
is the simply-connected simple algebraic group,
which is obtained as the double cover of $\SO(\Vnat)$.
The \emph{spinor representation}
is a $2^m$-dimensional representation of $G$
on $\bigwedge V_0$,
which decomposes as the direct sum
of \emph{half spinor representations}
$
 V_1 \coloneqq \bigwedge^\even V_0
$
and
$
 V_2 \coloneqq \bigwedge^\odd V_0.
$
These half spinor representations are related to each other
by an outer automorphism of $\Spin(2m)$.
They are self-dual if $m$ is even,
and dual to each other if $m$ is odd.
Let $\omega_1$ and $\omega_2$
be the fundamental weights corresponding to
the half spinor representations $V_1$ and $V_2$ respectively.
Given a dominant integral weight $\lambda$ of $G$,
the irreducible representation of $G$
with highest weight $\lambda$
will be denoted by $V_\lambda$.
We also write $(i,j) \coloneqq i \omega_1 + j \omega_2$,
so that
$
 V_1 = V_{(1,0)}
$
and
$
 V_2 = V_{(0,1)}.
$

Let $F_i$ be the homogeneous space of $G$
associated with the Dynkin diagram
\begin{align}\label{eq:DynkinD}
  \DynkinD
\end{align}
of type $D_m$ with the $i$-th node crossed out
(see \cite[Section 2.3]{MR1038279} for the notion and rudiments of crossed Dynkin diagrams).
The nodes of the Dynkin diagram correspond to the simple roots,
 and the fundamental weights $\omega_1, \dots, \omega_m$.
 In
 this paper we use the numbering of nodes as in \eqref{eq:DynkinD}.
For $i = 3, \ldots, m$,
the numbering fits well with the description of \( F _{ i } \) as orthogonal Grassmannians:
\begin{align}
 F_i
  &= \OG(m-i+1, \Vnat) \nonumber \\
 \label{eq:og}
  &\coloneqq \lc V \subset \Vnat \relmid \pair{-}{-}|_V = 0 \text{ and } \dim V = m-i+1 \rc.
\end{align}
We also let $\OG(m,\Vnat)$ and $\OG(m,2m)$
denote one of the connected components of the space \pref{eq:og} for $i=1$,
which coincides with the spinor variety $F_1\simeq F_2$.
Let further
$
 F_{ 1, 2 }
$
be the homogeneous space of $G$
associated with the Dynkin diagram
\eqref{eq:DynkinD}
with the nodes 1, 2 crossed out.
It can naturally be identified with
$
 \OG(m-1, \Vnat)
  \simeq \bP_{F_i}(\cS_i^\dual)
$
for $i=1, 2$,
where $\cS_i$ is the tautological subbundle of
\( \Vnat \otimes _{ \bfk } \cO _{ F _{ i } } \)
on $F_i$. 
They fit into the following diagram:
\begin{equation} \label{eq:diagram_F12}
\begin{gathered}
\xymatrix{
 & F_{ 1, 2 } \ar[ld]_{p_1} \ar[rd]^{p_2} & \\
 F_1& & F_2
}
\end{gathered}
\end{equation}

The Picard group of
$
 F _{ 1, 2 }
$
is given by
\begin{equation}
 \Pic \lb F _{ 1, 2 } \rb
 =
 p _{ 1 } ^{ * } \Pic \lb F _{ 1 } \rb
 \oplus
 p _{ 2 } ^{ * } \Pic \lb F _{ 2 } \rb
 \simeq \bZ^2.
\end{equation}
For
$
 i = 1, 2
$,
we let
$
 \cO_{F_i}(1)
$
denote the ample generator of
$
 \Pic \lb F _{ i } \rb
$.
Then the line bundle
$
 \cO_{F_{ 1, 2 }}(1,1) \coloneqq \cO_{F_1}(1) \boxtimes \cO_{F_2}(1)
$
is very ample, and one has
\begin{align}
 F _{ 1, 2 } \simeq \bP_{F_i} \lb \lb p _{ i * } \cO_{F_{ 1, 2 }}(1,1) \rb^\dual \rb
\end{align}
for both $i=1, 2$.
Recall that homogeneous vector bundles on
$
 F _{ i }
$
correspond to representations of the parabolic subgroup
$
 P_i \subset G
$
corresponding to
$
 F _{ i }
$,
which in turn correspond to representations
of the Levi subgroup of
$
 P _{ i }
$ if they are irreducible representations.
In our case the Levi subgroup is isomorphic to
$
S(\GL(m) \times \GL(1))
$,
where
$
 S ( \ )
$
denotes the subgroup consisting of elements of determinant $1$.
In this sense
$
 p _{ i * } \cO_{F_{ 1, 2 }}(1,1)
$
is the locally free sheaf of rank $m$ on $F_i$
associated with the representation of
$
 S(\GL(m) \times \GL(1))
$
with highest weight $(1,1)$.
If $i$ and $j$ are non-negative integers,
then one has
\begin{align}
 H^0 \lb \cO_{F_{ 1, 2 }}(i,j) \rb
  \simeq V_{(i,j)}^\dual
\end{align}
by the Borel--Weil theorem.

Let
\begin{equation}
 \label{eq:section}
 s \in H ^{ 0 } \lb F _{ 1, 2 }, \cO_{F_{ 1, 2 }}(1,1) \rb
 \simeq
 H ^{ 0 } \lb F _{ 1 }, p _{ 1 * } \cO_{F_{ 1, 2 }}(1,1) \rb
 \simeq
 H ^{ 0 } \lb F _{ 2 }, p _{ 2 * } \cO_{F_{ 1, 2 }}(1,1) \rb
\end{equation}
be a general section and let
\begin{equation}\label{eq:definition_of_X_and_Y}
\begin{split}
 D \coloneqq Z ( s ) \subset F _{ 1, 2 },\\
 X \coloneqq Z ( p _{ 1 * } s ) \subset F _{ 1 },\\
 Y \coloneqq Z ( p _{ 2 * } s ) \subset F _{ 2 },
\end{split}
\end{equation}
be its zero loci,
which are smooth complete intersections by Bertini
\cite[Theorem 1.10]{MR1201388}.
One can easily compute the rank and the degree of ${p_i}_* \cO_{F_{ 1, 2 }}(1,1)$
to show that $X$ and $Y$ are Calabi--Yau of dimension $m(m-3)/2$.
Set
$
 \pi _{ i } = p _{ i } | _{ D } \colon D \to F _{ i }
$
for
$
 i = 1, 2
$.

\begin{lemma} \label{lm:D}
The morphism
$
 \pi_1
$
is a $\bP^{m-2}$-bundle over
$
 F_1 \setminus X
$
and a $\bP^{m-1}$-bundle over
$
 X,
$
which are locally trivial in the Zariski topology.
The same holds for
$
 \pi _{ 2 }
$.
\end{lemma}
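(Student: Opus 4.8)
The strategy is to analyse the section $s$ fibrewise over $F_1$ by means of the projective bundle $p_1 \colon F_{12} = \bP_{F_1}(\mathcal{E}^\dual) \to F_1$, where $\mathcal{E} \coloneqq p_{1*}\cO_{F_{12}}(1,1)$ is the rank-$m$ bundle and $\cO_{F_{12}}(1,1)$ is its relative hyperplane bundle, so that $\cO_{F_{12}}(1,1)$ restricts to $\cO_{\bP^{m-1}}(1)$ on the fibres of $p_1$ and $H^0(F_{12}, \cO_{F_{12}}(1,1)) \cong H^0(F_1, \mathcal{E})$. Under this identification $s$ corresponds to $t \coloneqq p_{1*}s$, and by construction $X = Z(t)$. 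For a point $U \in F_1$, the restriction of $s$ to the fibre $p_1^{-1}(U) \cong \bP^{m-1}$ is the linear form on $\bP^{m-1}$ given by $t(U) \in \mathcal{E}_U = H^0(\bP^{m-1}, \cO(1))$, which is nonzero precisely when $U \notin X$. Hence $\pi_1^{-1}(U) = D \cap p_1^{-1}(U)$ is a hyperplane $\bP^{m-2}$ if $U \notin X$ and the whole $\bP^{m-1}$ if $U \in X$; it remains to upgrade these pointwise statements to the two asserted bundle structures.

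Over $F_1 \setminus X$ the section $t$ is nowhere vanishing, so $t \colon \cO_{F_1} \to \mathcal{E}$ is, over $F_1 \setminus X$, a subbundle inclusion; dually $t^\dual \colon \mathcal{E}^\dual \to \cO$ is surjective with kernel a rank-$(m-1)$ subbundle $\mathcal{K} \subset \mathcal{E}^\dual$. I would then verify the scheme-theoretic identity $D|_{F_1 \setminus X} = Z(s)|_{F_1 \setminus X} = \bP_{F_1 \setminus X}(\mathcal{K}) \subset \bP_{F_1 \setminus X}(\mathcal{E}^\dual)$: by the standard description, the zero scheme in $\bP_{F_1}(\mathcal{E}^\dual)$ of the section of the relative hyperplane bundle given by $t$ is exactly the linear subbundle cut out by $t$, which over each $U$ is the projectivization of the annihilator of $t(U)$, in agreement with the pointwise computation. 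Since the projectivization of a vector bundle is Zariski-locally trivial, $\pi_1$ restricts over $F_1 \setminus X$ to a Zariski-locally trivial $\bP^{m-2}$-bundle.

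Over $X$ the section $t$ vanishes identically, so $s$ vanishes along all of $p_1^{-1}(X)$; therefore $p_1^{-1}(X) \subseteq D$, and restricting the defining section $s$ of $D$ to $p_1^{-1}(X)$ (where it is zero) gives $\pi_1^{-1}(X) = D \cap p_1^{-1}(X) = p_1^{-1}(X)$ as schemes. But $p_1^{-1}(X) = \bP_X(\mathcal{E}^\dual|_X) \to X$ is a Zariski-locally trivial $\bP^{m-1}$-bundle, which is the remaining assertion. The statement for $\pi_2$ follows verbatim by exchanging the roles of $F_1$ and $F_2$, using that $s$ likewise determines a section of $p_{2*}\cO_{F_{12}}(1,1)$ via \eqref{eq:section}.

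The substantive point is the one in the second paragraph: identifying the restriction of the global section $s$ to the fibres of $p_1$ with the linear form $t(U)$, and upgrading the set-theoretic description of $D \cap p_1^{-1}(F_1 \setminus X)$ as a family of hyperplanes to the scheme-theoretic equality with $\bP_{F_1 \setminus X}(\mathcal{K})$. These are standard facts about sections of the relative $\cO(1)$ on a projectivized bundle, so once they are recorded the remaining steps are immediate, and no genericity of $s$ enters into the bundle structures themselves, only the smoothness of $X$, $Y$ and $D$ which is used elsewhere.
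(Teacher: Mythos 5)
Your proposal is correct and follows essentially the same line as the paper's own proof: both identify the fibrewise restriction $s|_{p_1^{-1}(x)}$ with the value $(p_{1*}s)(x)$ via the standard description of $\cO(1)$ on the projective bundle (the paper phrases this as cohomology-and-base-change), and both then observe that over $F_1\setminus X$ the divisor $D$ is the projectivization of a rank-$(m-1)$ locally free sheaf, while over $X$ it is all of $p_1^{-1}(X)$. The only cosmetic difference is that you describe the $\bP^{m-2}$-bundle as $\bP(\mathcal{K})$ with $\mathcal{K}=\ker\lb t^\dual\colon\mathcal{E}^\dual\to\cO\rb$, whereas the paper works with the (dual) quotient $\cQ=\mathcal{E}/\cO_{F_1}$; these give the same bundle.
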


\begin{proof}
Since the arguments for
$
 \pi _{ 1 }
$
and
$
 \pi _{ 2 }
$
are the same, we only give it for
$
 \pi _{ 1 }
$.
Fix a point
$
 x \in F _{ 1 }
$.
Since
$
 p _{ 1 }
$
is a projective bundle and
$
 \cO_{F_{ 1, 2 }}(1,1)
$
is
$
 p _{ 1 }
$-ample, by cohomology-and-base-change,
we obtain the standard isomorphism
\begin{equation}
 p _{ 1 * } \cO_{F_{ 1, 2 }}(1,1) | _{ x }
 \simto
 H ^{ 0 } \lb p _{ 1 } ^{ - 1 } ( x ), \cO_{F_{ 1, 2 }}(1,1) | _{ p _{ 1 } ^{ - 1 } ( x ) } \rb
\end{equation}
sending
$
 \lb p _{ 1 * } s \rb ( x )
 \in
 p _{ 1 * } \cO_{F_{ 1, 2 }}(1,1) | _{ x }
$
to
$
 s | _{ p _{ 1 } ^{ - 1 } ( x ) }
$.
Hence
\begin{equation}
 D \cap p _{ 1 } ^{ - 1 } ( x )
 =
 \pi _{ 1 } ^{ - 1 } ( x )
\end{equation}
is isomorphic to
$
 p_1^{ - 1 } ( x ) \simeq \bP^{m-1}
$
if $x \in X$, and to a hyperplane therein
otherwise.
It follows that the short exact sequence
\begin{align}
 0 \to \cO _{ F _{ 1 } } \stackrel{{p_1}_*s}{\to} p _{ 1 * } \cO_{F_{ 1, 2 }}(1,1) \to p _{ 1 * } \cO_{F_{ 1, 2 }}(1,1) / \cO _{ F _{ 1 } } =: \cQ \to 0
\end{align}
splits Zariski locally on
$
 F _{ 1 } \setminus X
$,
so that
$
 \pi _{ 1 }
$
is the
$
 \bP ^{ m - 2 }
$
bundle over
$
 F _{ 1 } \setminus X
$
associated to the locally free sheaf
$
 \cQ
$.
The latter claim follows from
$
 s | _{ p _{ 1 } ^{ - 1 } ( X ) } = 0
$.
\end{proof}

\begin{corollary} \label{cr:main}
One has
\begin{align} \label{eq:XYL}
 ([X]-[Y]) \bL^{m-1} = 0
\end{align}
in the Grothendieck ring of varieties.
\end{corollary}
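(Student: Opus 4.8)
The plan is to compute the class $[D] \in K_0(\Var/\bC)$ of the divisor $D = Z(s) \subset F_{12}$ in two different ways, one via $\pi_1$ and one via $\pi_2$, and then compare the two answers.

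First I would recall that a Zariski-locally trivial $\bP^n$-bundle $E \to Z$ satisfies $[E] = [Z] \cdot [\bP^n]$ in the Grothendieck ring, and combine this with the cut-and-paste relation for the closed embedding $X \hookrightarrow F_1$. By \pref{lm:D}, the map $\pi_1 \colon D \to F_1$ restricts to a Zariski-locally trivial $\bP^{m-2}$-bundle over $F_1 \setminus X$ and to a Zariski-locally trivial $\bP^{m-1}$-bundle over $X$, so that
\begin{align}
 [D] = ([F_1] - [X]) \cdot [\bP^{m-2}] + [X] \cdot [\bP^{m-1}].
\end{align}
Since $[\bP^{m-1}] = [\bP^{m-2}] + \bL^{m-1}$, the right-hand side collapses to
\begin{align}
 [D] = [F_1] \cdot [\bP^{m-2}] + [X] \cdot \bL^{m-1}.
\end{align}
Running the same argument for $\pi_2 \colon D \to F_2$ gives $[D] = [F_2] \cdot [\bP^{m-2}] + [Y] \cdot \bL^{m-1}$.

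Next I would use that $F_1$ and $F_2$ are isomorphic as varieties: the outer automorphism of $\Spin(\Vnat)$ exchanging the two half spinor representations carries $P_1$ to $P_2$, hence induces an isomorphism $G/P_1 \cong G/P_2$, so that $[F_1] = [F_2]$ in $K_0(\Var/\bC)$. Equating the two expressions for $[D]$ and cancelling the common summand $[F_i] \cdot [\bP^{m-2}]$ then yields $([X] - [Y]) \cdot \bL^{m-1} = 0$, which is \eqref{eq:XYL}.

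I do not expect any real obstacle beyond \pref{lm:D} itself. The two points that genuinely matter are: (i) that the two projective bundles of \pref{lm:D} are \emph{Zariski}-locally trivial, so that the scissor relations give the clean identity $[E] = [Z] \cdot [\bP^n]$ rather than merely an equality up to a power of $\bL$ --- this is exactly what \pref{lm:D} provides; and (ii) the equality $[F_1] = [F_2]$, which follows from the abstract isomorphism of the two spinor varieties recalled above.
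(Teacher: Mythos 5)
Your proof is correct and follows essentially the same route as the paper's: compute $[D]$ via the two stratified projective bundle structures from \pref{lm:D}, use the outer automorphism of $\Spin(2m)$ to identify $[F_1]=[F_2]$, and subtract. The only difference is a cosmetic rearrangement of the algebra.
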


\begin{proof}
One has
\begin{align}
 [D]
  = ([F_1]-[X])[\bP^{m-2}] + [X][\bP^{m-1}]
  = ([F_2]-[Y])[\bP^{m-2}] + [Y][\bP^{m-1}]
\end{align}
by \pref{lm:D}.
Since
$F_1$ and $F_2$ are related by an outer automorphism of $\Spin(2m)$
and hence isomorphic as algebraic varieties,
one has
\begin{align}
 0
  = ([X] - [Y])([\bP^{m-1}]-[\bP^{m-2}])
  = ([X] - [Y]) \bL^{m-1},
\end{align}
and \pref{cr:main} is proved.
\end{proof}

\begin{remark}
 \label{rm:ambient}
Let $\G(m,\Vnat)$ be the Grassmannian of $m$-spaces in $\Vnat = V_0 \oplus V_0^\dual$,
and $\cS$ be the universal subbundle on it.
The zero of the section $s'$ of 
$
 \Sym^2 \cS^\dual
$
associated with the natural pairing $\pair{-}{-}$ on $\Vnat$
is the homogeneous space of the form
\begin{align}
 \Or \lb \Vnat \rb / \GL(V_0).
\end{align}
It has two connected components,
which can be identified with $F_1$ and $F_2$
 in such a way that
 $\Vnat=V_{\omega_m}$ as a $G$-module,
 and $\scS^\dual|_{F_i}$ is
the equivariant vector bundle
associated with the irreducible representation of $P_i$
with the highest weight $\omega_m$ for both $i=1$ and 2.
Note that the lowest weight of this irreducible representation of $P_1$ (resp. $P_2$)
is $\omega_1-\omega_2$ (resp.~$-\omega_1+\omega_2$),
so that $\cS|_{F_1}$ (resp.~$\cS|_{F_2}$) is the equivariant vector bundle
associated with the irreducible representation of $P_1$ (resp.~$P_2$)
with the highest weight $-\omega_1+\omega_2$ (resp.~$\omega_1-\omega_2$).
Since the ample generator of $\Pic \G(m, \Vnat)$ restricts
to twice the ample generator of $\Pic F_1$,
one has
\begin{align}
 \cS(1)|_{F_1}
  &\simeq {p_1}_* \cO_{F_{ 1, 2 }}(-1,1) \otimes {p_1}_* \cO_{F_{ 1, 2 }}(2,0) \\
  &\simeq {p_1}_* \cO_{F_{ 1, 2 }}(1,1).
\end{align}
The same reasoning for $F_2$ gives
\begin{align}
 \cS(1)|_{F_2}
  ={p_2}_* \cO_{F_{ 1, 2 }}(1,1).
\end{align}
Since $H^0(\cS(1)) \to H^0(F_1, \cS(1)|_{F_1})$
is a projection to an irreducible component of restricted representation
by the Borel--Weil theorem, 
the section $s$ in 
\pref{eq:section}
is extended as an element of $H^0(\cS(1))$.
Therefore, it follows that the zero of the section
\begin{align}
 s + s'
  \in H^0 \lb 
   \cS(1) \oplus \Sym^2 \cS^\dual \rb
\end{align}
is isomorphic to the disjoint union of $X$ and $Y$.
\end{remark}

\begin{remark}
 \label{rm:triality}
When $m=4$,
the homogeneous spaces $F_1$ and $F_2$ are related to $F_4$
by an outer automorphism of $\Spin(8)$
called the \emph{triality automorphism}.
The homogeneous space
$
 F_4 = \OG(1,\Vnat)
$
is a quadric hypersurface in $\G(1,\Vnat) \simeq \bP^7$,
and $X$ and $Y$ are K3 surfaces of degree 12.
\end{remark}

%
%

\section{K3 surfaces in $\OG(5,10)$}
\label{sc:K3_in_OG(5,10)}
We use the same notation as in \pref{sc:OG48}.
We write
$
 \cO_{F_1}(i) \coloneqq p _{ 1 * } \cO _{ F _{ 1, 2 } } \lb i, 0 \rb
$
and
$
 \cE \coloneqq p _{ 1 * } \cO _{ F _{ 1, 2 } } \lb 0, 1 \rb,
$
so that
$
 H^0(\cO_{F_1}(1)) \simeq V_1^\dual 
$
and
$
 H^0(\cE) \simeq V_2^\dual.
$
Let
\begin{equation} \label{eq:diagram_H12}
\begin{gathered}
\xymatrix{
 & H_{ 1, 2 } \ar[ld]_{h_1} \ar[rd]^{h_2} & \\
 H_1 & & H_2
}
\end{gathered}
\end{equation}
be the diagram
obtained by replacing $m$ with $m+1$ in the diagram \pref{eq:diagram_F12}.
 We use the same numbering $1, 2, \dots, m+1$ for the nodes of 
 the Dynkin diagram $D_{m+1}$ as that of $D_m$ in \pref{sc:OG48}.
In some places we will write
\( G _{ m + 1 } \)
to mean the spin group of type \( D _{ m + 1 } \), to avoid confusion.
\pref{lm:tits} below
is an adaptation of \cite[Lemma 4.2]{Inoue2019}.

\begin{lemma} \label{lm:tits}
Let $F_1 \subset H_1$ be the equivariant embedding corresponding to
the unique inclusion of the Dynkin diagram of type $D_m$ with the node 1 crossed out into that of type $D_{m+1}$ with the same node crossed out.

\begin{enumerate}
\item The following morphism is surjective.
\begin{align}
 \left. h_2 \right|_{h_1^{-1}(F_1)} \colon h_1^{-1}(F_1) \to H_2
\end{align}
\item
There exists an isomorphism of projective bundles over $F_1$ as follows.
 \begin{equation}\label{eq:projbdl}
 \begin{gathered}
 \xymatrix{
  h_1^{-1}(F_1) \ar[rd]_{\left. h_1 \right|_{h_1^{-1}(F_1)}} \ar[rr] ^-{ \simeq } &
  & \bP \coloneqq \bP_{F_1} \lb \cO_{F_1}(-1) \oplus \cE^\dual \rb \ar[ld]^{\pi}\\
  &  F_1  &
 }
 \end{gathered}
 \end{equation}
 \end{enumerate}
\end{lemma}

\begin{proof}
 The argument is almost the same as the proof of \cite[Lemma 4.2]{Inoue2019},
 except for some computations relevant to Tits transformations
 and equivariant vector budles. 
 Hence we will be brief.

 Consider the following diagram, which corresponds to \cite[(75)]{Inoue2019}.
 \begin{equation}\label{eq:diagram}
 \begin{gathered}
 \xymatrix{
  & H_{1,2}
  \ar[ld]_{h_2} \ar[rd]^{h_1} 
  &	& H_{1,m+1}
  \ar[ld]_{h_1'} \ar[rd]^{h_{m+1}} &  \\
  H_2
  & h_1^{-1}(F_1) = \bP_{F_1}\lb \cG|_{F_1}\rb
  \ar[ld] \ar@{}[u]|{\bigcup} \ar[rd]
  & H_1
  & 
  F_1\ar@{=}[ld] \ar@{}[u]|{\bigcup} \ar[rd]
  & H_{m+1}
  \\
  \cT(F_1)
  \ar@{}[u]|{\bigcup} & &
  F_1\ar@{}[u]|{\bigcup} & & o 
  \ar@{}[u]|{\rotatebox{90}{$\in$}}\\
 }
 \end{gathered}
 \end{equation}
 This diagram is obtained in the following way.
 Fix a Borel subgroup \( B \) of $G _{ m + 1 } =\Spin(2m+2)$, and
 let $B\subset P_i \subset G _{ m + 1 }$ be the maximal parabolic subgroups corresponding to the fundamental weight $\omega_i$ for $i=1,\dots,m+1$.
 We will write $H_i \coloneqq G _{ m + 1 }/P_i$ and $H_{i,j}\coloneqq G _{ m + 1 }/\lb P_i \cap P_j\rb$, consistently with the notation of \pref{eq:diagram_H12}.
 Let $W$ and $W_{P_i}$ denote 
 the Weyl groups of $G$ and $P_i$, respectively.
 Let further $W^{P_i} \subset W$ be the set of minimal length representatives
 of the coset $W_{P_i}\backslash W$ for $i=1, \ldots, m+1$.

 To show the assertion (1),
 we apply \cite[Lemma 2.4]{MR3130568} twice to the diagram \pref{eq:diagram};
 first to the Tits transform $F_1=\cT(o)\coloneqq h_1'\lb h_{m+1}^{-1}(o)\rb$ 
 of the point $o\coloneqq P_{m+1}/P_{m+1}$, which is induced by the right hut in the diagram,
 and then to the Tits transform $\cT(F_1)\coloneqq h_2\lb h_1^{-1}(F_1)\rb$ 
 induced by the left hut.
 Each of the two transformations is a correspondence of Schubert cycles, hence is
 described as the correspondence of (representatives of) the appropriate cosets of the Weyl groups.
Concretely, the correspondences for the first and the second transformations are given by the composition of the following maps for $(i,j)=(m+1, 1)$ and $(1,2)$ respectively,
where
$w_i$ denotes
the longest elements of $W_{P_i}$ for all $i$.
 \begin{align}
  t_{i,j}\colon  W^{P_i} \to w_i W^{P_i} \subset W \to W_{P_j}
  \backslash W \simeq W^{P_j}
 \end{align}
 Together with \pref{lm:comb} below, 
 we have 
 \begin{align}
  t_{m+1,1}(\id)=\widetilde{w}_{m+1}
  \text{ and } t_{1,2}(\widetilde{w}_{m+1})=w^{P_2},
 \end{align}
 where $\wtilde_{m+1} \in W^{P_1}$ be the minimal length representative
 of $W_{P_1}w_{m+1} \in W_{P_1}\backslash W$,
 and $w^{P_i}$ denotes
 the longest element of $W^{P_i}$ for all $i$.
 Therefore $F_1$ and $\cT(F_1)$ turn out to be the Schubert varieties
 \[
  \overline{B \widetilde{w}_{m+1}^{-1} P_1/P_1}
  \subset H_1 \text{ and }
  X(w^{P_2}) \coloneqq \overline{B (w^{P_2})^{-1} P_2/P_2} \subset H_2,
 \]
 respectively.
 Since $\dim X(w^{P_2}) = l(w^{P_2})$ is the maximum, we obtain $\cT(F_1)= X(w^{P_2}) = H_2$.

 For (2),
 we consider two equivariant vector bundles $\cG^\dual$  over $H_1$ 
 and $\cG'^\dual$ over $H_{1,m+1}$,
 both of which correspond to the irreducible representations
 of the highest weight $\omega_2$,
 i.e., 
 $H^0\lb H_1, \cG^\dual \rb \simeq H^0\lb H_{1,m+1}, \cG'^\dual \rb 
 \simeq V_2^\dual$
 in the sense of \(G _{ m + 1 }\)-modules.
 Since $h_1^{-1}(F_1)=\bP_{F_1}\lb \cG|_{F_1}\rb$, it is enough to show 
 $\cG|_{F_1} = \cO_{F_1}(-1)\oplus \cE^\dual$.
 As we can see in the same way as \cite[(77)]{Inoue2019},
 we have a short exact sequence
 \begin{align}
  0 \to \cG'|_{F_1} \to \lb h_1'^*\cG\rb|_{F_1} \simeq \cG | _{ F _{ 1 } }
  \to \cO_{F_1}(-1) \to 0.
 \end{align}
 Note that there is an equivalence of categories
 \begin{align}
  \mathrm{VB}_G(G/P) \simeq \module(P); \quad \cN \mapsto \cN|_{P/P} 
 \end{align}
 between the category of $G$-equivariant vector bundles on $G/P$
 and the category of $P$-modules, for any parabolic subgroup $P\subset G$.
  Under this equivalence, 
 $\cG'^\dual|_{F_1}$ corresponds to the $(P_1\cap P_{m+1})$-module of highest weight $\omega_2$
 regarded as a \(\lb P_1\cap \Spin(2m) \rb \)-module.
 Since this is the \(\lb P_1\cap \Spin(2m) \rb \)-module of highest weight $\omega_2$,
 we have $\cG'|_{F_1} \simeq \cE^\dual$.
 Moreover, $\cE^\dual(1)$ corresponds to the irreducible \(\lb P_1\cap \Spin(2m) \rb \)-module
 whose highest weight is the dominant weight $\omega_{m+1}$. 
 Hence, by the Bott--Borel--Weil theorem,
 we obtain
 \[
  \Ext^1(\cO_{F_1}(-1),\cE^\dual) = H^1(\cE^\dual(1)) =0.
 \]
 Thus we conclude the proof.
\end{proof}

\begin{lemma} \label{lm:comb}
 Let us use the same notation in the proof of \pref{lm:tits}. It holds
 \begin{align}
  W_{P_2}w_1 \wtilde_{m+1} = W_{P_2} w^{P_2}.
 \end{align}
\end{lemma}
\begin{proof}
    One of a reduced decomposition of the longest element of the
	Weyl group of each type is given by \cite{MR1628449}.
	In particular, we have
	\begin{align}
			w_1&= (s_2)(s_3s_2)(s_4s_3s_2)(\dots)(s_{m+1}\dots s_4s_3s_2),\\
			\label{eq:wmplus1}
			w_{m+1}&=(s_1s_2)(s_3s_1s_2s_3)(s_4s_3s_1s_2s_3s_4)(\dots)(s_m
			\dots s_4s_3s_1s_2s_3s_4 \dots s_m),
	\end{align}
	where $s_1, \dots, s_{m+1} \in W$ are simple reflections. 
	By using \pref{eq:wmplus1} and by computing 
	the right action of $w_{m+1}\in W_{P_{m+1}}$ on a weight $\omega_1$,
	we can derive the following formula inductively.
	\begin{align}
			\wtilde_{m+1}=\begin{cases}
			(s_1)(s_3s_2)(s_4s_3s_1)(\dots)(s_m\dots s_4s_3s_1) & \text{for even $m$,}\\
			(s_1)(s_3s_2)(s_4s_3s_1)(\dots)(s_m\dots s_4s_3s_2) & \text{for odd $m$.}
			\end{cases}	
	\end{align}
	Similarly, we can also check by an induction
	that the right action of $w_1 \wtilde_{m+1} \in W$ on
	a weight $\omega_2$ is the same as that of 
	\begin{align}
	 (s_2s_3s_4\dots s_{m+1})\wtilde_{m+1} \in W^{P_2},
	\end{align}
	which is nothing but the longest element $w^{P_2}\in W^{P_2}$ as the lengths coincide. 
\end{proof}

Consider the embedding $F_1 \subset G(m,\Vnat)$ in \pref{rm:ambient}.
For each maximal isotropic subspace $V \subset \Vnat$,
the inner product $\pair{-}{-}$ gives a canonical linear isormorphism $\Vnat/V \simeq V^\dual$.
Hence we have a short exact sequence
\begin{align}
 \label{eq:seq}
 0 \to \cS|_{F_1} \to \Vnat \otimes \cO_{F_1} 
 \to \cS^\dual|_{F_1} \to 0
\end{align}
on $F_1$, where $\cS$ is the universal subbundle on $G(m,\Vnat)$.
 Recall that $\cS^\dual|_{F_1}$ is 
 the equivariant vector bundle associated with the irreducible
 representation of $P_1$ with the highest weight $\omega_m$, 
 so that
 $H^0(S^\dual|_{F_1})\simeq \Vnat^{\dual} = \Vnat$ as $\Spin(2m)$-modules.

From now, we consider only the case
$
 m = 4
$.
By the triality for the Dynkin diagram of type $D_4$, 
 we have another short exact sequence on $F_1$,
\begin{align}
 0 \to \cE^\dual \to V_2^\dual \otimes \cO_{F_1} 
 \to \cE \to 0.
\end{align}

Consider the morphism
\begin{align}
 \mu \colon \bP \coloneqq \bP_{F_1} \lb \cO_{F_1}(-1) \oplus \cE^\dual \rb
 \to \bP \lb V_1 \oplus V_2^\dual \rb
\end{align}
defined by the natural inclusion
\begin{align}
 \cO_{F_1}(-1) \oplus \cE^\dual
  \subset \lb V_1 \oplus V_2^\dual \rb \otimes \cO _{ F _{ 1 } }.
\end{align}
Let
$
\Sigma
  \coloneqq \mu(\bP)
  \subset \bP \lb V_1 \oplus V_2^\dual \rb
$
be the image of \( \mu \) with the reduced structure.

\begin{lemma}

There exists an isomorphism
 \( H _{ 2 } \simto \Sigma \)
 which fits in the following commutative diagram.
\begin{align}
\label{eq:commutative}
\begin{CD}
 h_1^{-1}(F_1) @>{\sim}>> \bP \\
 @V{\left. h_2 \right|_{h_1^{-1}(F_1)}}VV @VV{\mu}V \\
 H_2 @>{\sim}>> \Sigma
\end{CD}
\end{align}
\end{lemma}

\begin{proof}
We write
\( \bP ' \coloneqq h_1^{-1}(F_1) \)
for simplicity.
Since \( \dim \bP = \dim H_2 \), it follows from \cite[Lemma 3.13]{MR3130568} that the  morphism $h _{ 2 } | _{ \bP ' }$ is a birational morphism. 
Since \( H _{ 2 } \) is smooth and hence normal, it follows that
\( \left(h _{ 2 } | _{ \bP ' }\right) _{ * } \cO _{ \bP ' } \simeq \cO _{ H _{ 2 }} \), so that
\(
  H ^{ 0 } \left( \bP ', \left(h _{ 2 } | _{ \bP ' }\right) ^{ * } \cO _{ H _{ 2 } } ( 1 )\right)
  \simeq
  H ^{ 0 } \left( H _{ 2 }, \cO _{ H _{ 2 } } ( 1 ) \right)
\).
Therefore, the composition of the morphism $h _{ 2 } | _{ \bP ' }$ with the embedding
\(
  H _{ 2 } \hookrightarrow \bP H ^{ 0 } \left( H _{ 2 }, \cO _{ H _{ 2 } } ( 1 ) \right) ^{ \dual }
\)
coincides with the morphism defined by the complete linear system associated to the line bundle
\( \left(h _{ 2 } | _{ \bP ' }\right) ^{ * } \cO _{ H _{ 2 } } ( 1 ) \).

Also, the isomorphism
\(
  H ^{ 0 } \left( \bP, \cO _{ \pi } ( 1 ) \right)
  \simeq
  V _{ 1 } ^{ \dual } \oplus V _{ 2 }
\)
implies that the morphism \( \mu \) is associated to the complete linear system of the line bundle
\( \cO _{ \pi } ( 1 ) \).
Therefore it is enough to show that the line bundle
\( \left(h _{ 2 } | _{ \bP ' }\right) ^{ * } \cO _{ H _{ 2 } } ( 1 ) \)
on \( \bP ' \)
corresponds to
\( \cO _{ \pi } ( 1 ) \)
on
\( \bP \)
under the isomorphism \( \bP ' \simeq \bP \). Note, by the commutative diagram \eqref{eq:projbdl},
this is equivalent to the isomorphism
\(
  \cO _{ h _{ 1 } | _{ \bP ' } } ( 1 )
  \simeq
  \left(h _{ 2 } | _{ \bP ' }\right) ^{ * } \cO _{ H _{ 2 } } ( 1 )
\).

To see this, consider the projective bundle
\(
  h_1 \colon H_{1,2}=\bP_{H_1}(\cG) \to H_1
\).
The isomorphism
\(
  \cG^\dual \simeq {h_1}_* h_2^* \cO _{ H _{ 2 } } ( 1 )
\)
implies that
$
 \cO_{h_1} (1)
 \simeq
 h_2^* \cO _{ H _{ 2 } } ( 1 )
$.
On the other hand, since the projective bundle
\(
  h_1|_{\bP'} \colon \bP'= \bP_{F_1}(\cG|_{F_1}) \to F_1
\)
is obtained as the base change of
\( h _{ 1 } \)
by
\(
  F _{ 1 } \hookrightarrow H _{ 1 }
\),
we obtain the sequence of isomorphisms
\(
  \cO_{h_1|_{\bP'}} (1)
  \simeq
  \cO _{ h _{ 1 }} ( 1 ) | _{ \bP ' }
  \simeq
  h_2^* \cO _{ H _{ 2 } } ( 1 ) | _{ \bP ' }
  =
  \left(h _{ 2 } | _{ \bP ' }\right) ^{ * } \cO _{ H _{ 2 } } ( 1 )
\).
\end{proof}

Now we restrict ourselves to the case $m=4$,
where we have the following diagram:
\begin{equation}\label{eq:m4}
 \begin{gathered}
 \xymatrix{
   & H_{1,2}
  \ar[ld]_{h_2} \ar[rd]^{h_1} & \\
   H_2
  & \bP \ar[ld]_{\mu} \ar@{}[u]|{\bigcup} \ar[rd]^{\pi}
  & H_1 \\
   H_2
  \ar@{=}[u] & &
  F_1\ar@{}[u]|{\bigcup} 
 }
 \end{gathered}
\end{equation}

Let
$
 \sigma \in H^0(F_1, \cE(1))
$
be the image of a linear map
$
 \sigmatilde \colon V_1 \to V_2^\dual
$
by the map
\begin{align} \label{eq:s}
 V_1^\dual \otimes V_2^\dual
  \simeq H^0(\cO_{F_1}(1)) \otimes H^0(\cE)
  \to H^0(\cE(1)).
\end{align}
The map \pref{eq:s} is surjective,
since it is a non-zero $\Spin(8)$-equivariant map
and $H^0(\cE(1)) \simeq V_{(1,1)}^\dual$ is an irreducible
representation of $\Spin(8)$.
In particular,
$\sigma$ is general if so is $ \sigmatilde$.
On the other hand, 
the map $\sigmatilde$ induces a linear embedding
\begin{align} \label{eq:embedding}
 \bP \lb\Gamma_{\sigmatilde} \rb \colon \bP(V_1) \hookrightarrow \bP(V_1 \oplus V_2^\dual),
  \qquad [p] \mapsto [(p, \sigmatilde(p))],
\end{align}
whose image will be denoted by
$
 L_{\sigmatilde}.
 \subset \bP(V_1 \oplus V_2^\dual).
$
Conversely, if a 7-dimensional linear subvariety
$
 L \subset \bP(V_1 \oplus V_2^\dual)
$ satisfies $L \cap \bP(V_2^\dual) = \emptyset$,
then there exists an element  
$
 \sigmatilde \in V_1^\dual \otimes V_2^\dual
$
such that
$
 L = L_{\sigmatilde}.
$
It means that 
$L_{\sigmatilde}$ is general if so is $ \sigmatilde$.
Thus 
a general $\sigma \in H^0(\cE(1))$ corresponds to
a general $L_{\sigmatilde}\subset \bP(V_1\oplus V_2^\dual)$.

\pref{pr:Z=Z} below is an adaptation
of \cite[Proposition 4.1]{Inoue2019}
to the present situation.
The proof 
is identical, 
and hence omitted.

\begin{proposition} \label{pr:Z=Z}
The zero locus $
 Z(\sigma)
  \subset F_1
  \subset \bP(V_1)
 $ of $\sigma \in H^0\lb\cE(1)\rb$
and the linear section
$
 H_2 \cap L_\sigmatilde
  \subset L_\sigmatilde
  \simeq \bP(V_1)
$
are projectively equivalent.
\end{proposition}

One concludes that
a general linear section of $H_2=\OG(5,10)$
is projectively equivalent
(and hence isomorphic)
to the zero of a general section of $\cE(1)$ on $F_1=\OG(4,8)$, which is nothing but $X$ in \pref{sc:OG48} for $m=4$.

\section{Projective duality and derived equivalence}
 \label{sc:PD}

The orthogonal Grassmannian $\OG(5,10)$ of dimension 10
can be embedded into the projectivization $\bP$
of 16-dimensional half spinor representation of $\Spin(10)$.
The projective dual variety of $\OG(5,10)$
in the dual projective space $\bPv$
is also isomorphic to $\OG(5,10)$.
It is known in \cite[Theorem 0.3]{MR977768}
that a generic K3 surface of degree 12
can be described as the intersection $\OG(5,10) \cap L$
with a linear subspace $L \subset \bP^{15}$
of codimension 8,
which is unique up to the action of $\Spin(10)$.

Consider a pair of K3 surface $( X, Y )$ which is constructed as in 
\eqref{eq:definition_of_X_and_Y} for $m=4$, and realize $X$ as $\OG(5,10) \cap L$.
As explained in \cite[Example 1.3]{MR1714828},
the intersection
\begin{align}
 \Xv \coloneqq \OG(5,10) \cap L^\bot \in \bPv^{15}
\end{align}
of the dual $\OG(5,10)$
with $L^\bot$ in the dual projective space
is the moduli space
\begin{align}
 \Xv \simeq \cM_X(2, \cO_X(1), 3),
\end{align}
where $\cM_X(r, \ell, t)$ is the moduli space
of stable sheaves $E$ on $X$ satisfying
\begin{align}
 \rank E = r, \ 
 c_1(E) = \ell, \ 
 \chi(E) = r + t.
\end{align}
$(X, \Xv)$ give Fourier--Mukai partners,
which are not isomorphic if $X$ has Picard number one by \cite[Theorem 2.1]{MR1987738} (see also its proof).

\begin{proposition} \label{pr:Xv=Y}
One has an isomorphism $\Xv \simeq Y$.
\end{proposition}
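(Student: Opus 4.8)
The plan is to carry out the identification of \pref{sc:K3_in_OG(5,10)} for both half-spinor representations of $\Spin(2m)$ (with $m=4$) at once, and to compare the two linear sections of $\OG(5,10)$ so obtained by means of the natural pairing. By \pref{pr:Z=Z}, \pref{pr:FPH}, and the concluding statement of \pref{sc:K3_in_OG(5,10)}, the K3 surface $X = Z(p_{1*}s) \subset F_1$ is isomorphic to a general codimension-$8$ linear section $\OG(5,10) \cap L_{\stilde}$, where $L_{\stilde} \subset \bP(V_1 \oplus V_2^\dual) = \bP^{15}$ is the linear subspace \pref{eq:embedding} attached to a general linear map $\stilde \colon V_1 \to V_2^\dual$ whose image under \pref{eq:s} is $s$. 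Since Mukai's theorem asserts that the linear subspace $L$ of \pref{sc:PD} is unique up to the $\Spin(10)$-action, and since that action also operates on the dual projective space in a manner preserving both the projective dual of $\OG(5,10)$ and the operation $L \mapsto L^\bot$, we may take $L = L_{\stilde}$, so that $\Xv \cong \OG(5,10) \cap L_{\stilde}^\bot$ in the dual projective space $\bP((V_1 \oplus V_2^\dual)^\dual) = \bP(V_2 \oplus V_1^\dual)$, exactly as in \pref{sc:PD}. A one-line computation then identifies $L_{\stilde}^\bot$: writing $\Gamma_{\stilde} \subset V_1 \oplus V_2^\dual$ for the graph of $\stilde$, its annihilator with respect to the canonical pairing between $V_1 \oplus V_2^\dual$ and $V_1^\dual \oplus V_2$ is the graph of $-\stilde^\dual \colon V_2 \to V_1^\dual$; hence $L_{\stilde}^\bot = L_{-\stilde^\dual}$, now regarded as the linear subspace of $\bP(V_2 \oplus V_1^\dual)$ produced by \pref{eq:embedding} after interchanging $V_1$ and $V_2$.

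Next, since the construction of \pref{sc:K3_in_OG(5,10)} is $\Spin(2m)$-equivariant, applying the outer automorphism that swaps the two half-spinor nodes yields verbatim the statement that a general codimension-$8$ linear section of $\OG(5,10)$ is isomorphic to $Z(t) \subset F_2$, where $t$ is the image of a general linear map $V_2 \to V_1^\dual$ under the $F_2$-analogue of \pref{eq:s}. Applying this to $L_{\stilde}^\bot = L_{-\stilde^\dual}$ gives $\Xv \cong Z(t) \subset F_2$ with $t$ the image of $-\stilde^\dual$, and it remains only to identify $t$ with $p_{2*}s$. Both versions of \pref{eq:s} are non-zero $\Spin(2m)$-equivariant maps onto the irreducible representation $V_{(1,1)}^\dual$, which occurs in $V_1^\dual \otimes V_2^\dual$ with multiplicity one; they therefore agree---up to a single non-zero scalar---after the transposition $V_1^\dual \otimes V_2^\dual \cong V_2^\dual \otimes V_1^\dual$ of tensor factors, and that transposition carries $\stilde$ to its transpose $\stilde^\dual$. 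Since $p_{1*}s$ and $p_{2*}s$ are literally the same vector of $V_{(1,1)}^\dual \cong H^0(F_{12}, \cO_{F_{12}}(1,1))$, it follows that $t$ is a non-zero scalar multiple of $-p_{2*}s$; as rescaling does not affect the zero locus, $Z(t) = Z(p_{2*}s) = Y$, and therefore $\Xv \cong Y$.

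The step I expect to be the main obstacle is the last one: keeping the branching $S|_{\Spin(2m)} = V_1 \oplus V_2^\dual$ of the half-spinor representation, the projective self-duality of $\OG(5,10)$, and the two copies of \pref{eq:s} mutually compatible, so as to be certain that the linear section of the dual $\OG(5,10)$ cut out by $L_{\stilde}^\bot$ corresponds to the transpose $\stilde^\dual$---equivalently, to $p_{2*}s$---rather than to some unrelated linear map. The sign and scalar ambiguities one encounters along the way are harmless in themselves, since they merely rescale sections whose zero loci are what is being compared; the genuine content lies in the bookkeeping of these equivariant identifications.
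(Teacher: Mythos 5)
Your proof is correct and follows essentially the same route as the paper's. The paper's proof (of \pref{pr:Xv=Y}) is a compressed version of your argument: it writes down the two linear subspaces $L_1 = L_{\stilde} \subset \bP(V_1 \oplus V_2^\dual)$ and $L_2 = L_{-\stilde^\dual} \subset \bP(V_1^\dual \oplus V_2)$, observes $L_1 = L_2^\bot$, and invokes \pref{pr:Z=Z} (together with its $F_2$-analogue obtained by the outer automorphism, left implicit) to get $X = L_1 \cap H_2$ and $Y = L_2 \cap H_1$, from which $\Xv \cong Y$ follows at once. The steps you flag as the main obstacle -- the bookkeeping that identifies the lift $t$ for the $F_2$-side with $-\stilde^\dual$ up to a harmless nonzero scalar, using equivariance and the multiplicity-one occurrence of $V_{(1,1)}^\dual$ in $V_1^\dual \otimes V_2^\dual$ -- are exactly the details the paper suppresses by citing \pref{pr:Z=Z} symmetrically; your spelling them out is sound and does not introduce any new ideas beyond the intended argument.
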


\begin{proof}
We use the notation introduced in \pref{sc:K3_in_OG(5,10)}.
A general element
$
  \sigmatilde \in V_1^\dual \otimes V_2^\dual
$
defines the linear subspaces
\begin{align}
 L_1 \coloneqq \Image \lb (\id_{V_1},  \sigmatilde) \colon \bP(V_1) \to \bP(V_1 \oplus V_2^\dual) \rb
\end{align}
and
\begin{align}
 L_2 \coloneqq \Image \lb (-  \sigmatilde, \id_{V_2}) \colon \bP(V_2) \to \bP(V_1^\dual \oplus V_2) \rb,
\end{align}
in the dual projective spaces,
which are mutually orthogonal;
\begin{align}
 L_1 = (L_2)^\bot, \quad
 L_2 = (L_1)^\bot.
\end{align}
\pref{pr:Z=Z} shows
\begin{align}
 X = L_1 \cap H_2 \subset F_1, \quad
 Y = L_2 \cap H_1 \subset F_2,
\end{align}
which immediately implies \pref{pr:Xv=Y}.
\end{proof}

\pref{th:main}
follows from \pref{cr:main} for $m = 4$, the last line of \pref{sc:K3_in_OG(5,10)},
and \cite[Example 1.3]{MR1714828}.

\begin{remark}\label{rm:quadric_K3}
Let $\bO$ be the complexified Cayley octonion algebra.
Consider the projective space of null-square imaginary octonions
\begin{align}
 Q \coloneqq \lc \ld \sum_{i=0}^7 x_i \bfe_i \rd \in \bP(\bO) \relmid
  x_0 = \sum_{i=1}^7 x_i^2 = 0
  \rc,
\end{align}
and set
$
 Q _{ 1 } = Q _{ 2 } = Q
$
with coordinates $x$ and $y$, respectively. Together with
\begin{align}
 Q _{ 1, 2 } \coloneqq \lc (x, y) \in Q _{ 1 } \times Q _{ 2 } \relmid x \cdot y = 0 \in \bO \rc
\end{align}
they form the following diagram, where the projections $q_1$ and $q_2$ are both
$\bP^2$-bundles.
\begin{equation} \label{eq:diagram_Q12}
\begin{gathered}
\xymatrix{
 & Q_{ 1, 2 } \ar[ld]_{q_1} \ar[rd]^{q_2} & \\
 Q_1 & & Q_2,
}
\end{gathered}
\end{equation}

Let $s'$ be a general section of $H^0 \lb \cO_{Q_{ 1, 2 }}(1,1) \rb$ and set
\begin{align}
\begin{split}
 D' \coloneqq Z ( s' ) \subset Q _{ 1, 2 },\\
 X' \coloneqq Z ( q _{ 1 * } s' ) \subset Q _{ 1 },\\
 Y' \coloneqq Z ( q _{ 2 * } s' ) \subset Q _{ 2 }.
\end{split}
\end{align}
Then both $X'$ and $Y'$ are K3 surfaces of degree 12,
and the same reasoning as \pref{cr:main} gives the equality
\begin{align}
 ([X']-[Y']) \bL^2 = 0
\end{align}
in the Grothendieck ring of varieties. In fact, the pairs
$
 \lb X ', Y ' \rb
$
are degenerations of the pairs
$
 \lb X, Y \rb
$
discussed above.
It is an interesting problem to see if $X'$ and $Y'$ are not isomorphic,
and characterize K3 surfaces
which can be obtained in this way.
\end{remark}

\if0
\section{K3 surfaces in the quadric 5-fold}
 \label{sc:quadric_K3}

Let
$
 s_2 \in \Hom(\cO_{F_1}, \cE) \simeq V_2^\dual
$
be a general element and
$
 \cE' \coloneqq \coker(s_2)
$
be its cokernel,
which is a globally generated locally free sheaf of rank 3.
The exact sequence
\begin{align}
 0 \to \cO_{F_1}(1) \xrightarrow{s_2} \cE(1) \to \cE'(1) \to 0
\end{align}
shows that the zero locus $X$ of a general section of $\cE(1)$
degenerates to the zero $X'$ of a general section of $\cO_{F_1}(1) \oplus \cE'(1)$,
which again is a K3 surface of degree 12.
Take a general section $s_1 \in H^0(\cO_{F_1}(1)) \simeq V_1^\dual$ and set
\begin{align}
 Q_1 \coloneqq Z(s_1) \subset F_1.
\end{align}
Similarly, $Q_2 \subset F_2$ is defined as the zero of $s_2$
considered as an element of
$
 H^0 \lb \cO_{F_2}(1) \rb \simeq V_2^\dual.
$
Both $Q_1$ and $Q_2$ are smooth 5-dimensional quadrics,
since they are general hyperplane sections of 6-dimensional quadrics.
Since
$
 \bP_{F_1} \lb \lb \cE' \rb^\dual \rb
  \subset \bP_{F_1} \lb \cE^\dual \rb
  = F_{ 1, 2 }
$
is equal to
$
 p_2^{-1} (Q_2),
$
the diagram \pref{eq:diagram_F12}
restricts to the diagram
\begin{equation} \label{eq:diagram_Zs2}
\begin{gathered}
\xymatrix{
 & \bP_{F_1} \lb \lb \cE' \rb^\dual \rb \ar[ld] \ar[rd] & \\
 F_1 & & Q_2.
}
\end{gathered}
\end{equation}
The bundle $\cE'|_{Q_1}$ is an \emph{Ottaviani bundle}
in the sense of \cite[Definition 2.1]{1605.04680}.
We set
\begin{align}
 Q_{ 1, 2 }
  \coloneqq \bP_{Q_1} \lb \lb \left. \cE' \right|_{Q_1} \rb^\vee \rb
  \subset \bP_{F_1} \lb \lb \cE' \rb^\dual \rb.
\end{align}
By restricting \eqref{eq:diagram_Zs2} to $Q_{ 1, 2 }$,
we obtain the diagram
\begin{equation} \label{eq:diagram_Q12}
\begin{gathered}
\xymatrix{
 & Q_{ 1, 2 } \ar[ld]_{q_1} \ar[rd]^{q_2} & \\
 Q_1 & & Q_2,
}
\end{gathered}
\end{equation}
where both $q_1$ and $q_2$ are $\bP^2$-bundles.
We write elements of $\Pic Q_{ 1, 2 } \simeq \bZ^2$ as
\begin{align}
 \cO_{Q_{ 1, 2 }}(i,j) \coloneqq \cO_{Q_1}(i) \boxtimes \cO_{Q_2}(j).
\end{align}

Let $\bO$ be the complexified Cayley octonion algebra.
According to \cite[Theorem 2.6]{1605.04680},
the projective space of null-square imaginary octonions
\begin{align}
 Q \coloneqq \lc \ld \sum_{i=0}^7 x_i \bfe_i \rd \in \bP(\bO) \relmid
  x_0 = \sum_{i=1}^7 x_i^2 = 0
  \rc
\end{align}
can naturally be identified
with both $Q_1$ and $Q_2$
in such a way that $Q_{ 1, 2 }$
is identified with
\begin{align}
 \lc (x, y) \in Q \times Q \relmid x \cdot y = 0 \in \bO \rc
\end{align}
and $q_1$ and $q_2$ are identified
with the first and the second projections respectively.
It follows that the diagram \pref{eq:diagram_Q12} is symmetric
with respect to the exchange of $Q_1$ and $Q_2$.

Let $s'$ be a general section of $H^0 \lb \cO_{Q_{ 1, 2 }}(1,1) \rb$ and set
\begin{align}
\begin{split}
 D' \coloneqq Z ( s' ) \subset Q _{ 1, 2 },\\
 X' \coloneqq Z ( q _{ 1 * } s' ) \subset Q _{ 1 },\\
 Y' \coloneqq Z ( q _{ 2 * } s' ) \subset Q _{ 2 }.
\end{split}
\end{align}
Then both $X'$ and $Y'$ are K3 surfaces of degree 12,
and the same reasoning as \pref{cr:main} gives the equality
\begin{align}
 ([X']-[Y']) \bL^2 = 0
\end{align}
in the Grothendieck ring of varieties.
It is an interesting problem to see if $X'$ and $Y'$ are not isomorphic,
and characterize K3 surfaces
which can be obtained in this way.

\fi

\section{K3 surfaces in $\G(2,6)$}\label{sc:K3_surfaces_in_G26}

Let $W$ be a vector space of dimension 6.
The Pfaffian hypersurface
$
 \Pf(W) \subset \bP \lb \bigwedge^2 W^\dual \rb
$
consists of skew bilinear forms
on $W$
whose rank is strictly smaller than 6.
A general linear subspace
$
 L \subset \bP \lb \bigwedge^2 W^\dual \rb
$
of dimension 5 determines a cubic 4-fold
\begin{align}
 X \coloneqq \Pf(W) \cap L
\end{align}
and a K3 surface
\begin{align}
 Y \coloneqq \G(2,W) \cap L^\bot.
\end{align}
It is known by \cite[Theorem 2]{0610957}
that there is a fully faithful functor
$
 \iota \colon D ( Y ) \to D ( X )
$
and a semiorthogonal decomposition
\begin{align}\label{eq:sod_for_cubic_fourfold}
 D ( X )
  = \la \cO_X, \cO_X(1), \cO_X(2), \iota D ( Y ) \ra.
\end{align}
Fix a linear subspace $U \subset W$ of dimension 5.
The rational map
\begin{align}
 \varphi \colon \Pf(W) \dashrightarrow \bP(U)
\end{align}
induced by the composition of the wedge square
$
 (-)^{\wedge 2} \colon \bigwedge^2 W^\dual \to \bigwedge^4 W^\dual,
$
the projection
$
 \bigwedge^4 W^\dual \to \bigwedge^4 U^\dual,
$
and the (canonical up to scalar) isomorphism
$
 \bigwedge^4 U^\dual \simto U
$
sends
$
 x \in \bigwedge^2 W^\dual \subset \Hom(W, W^\dual)
$
to
$
 \ker x \cap U \subset U.
$
It is defined by a linear subsystem of $|2H|$,
where $H$ is the hyperplane section of $\Pf(W)$.
The base locus of this linear subsystem is the union
of
\begin{align}
 B_1 \coloneqq \lc x \in \Pf(W) \relmid \ker x \subset U \rc
\end{align}
and
\begin{align}
 B_2 \coloneqq \lc x \in \Pf(W) \relmid \dim \ker x = 4 \rc.
\end{align}
The inclusion
$
 U \subset W
$
induces a linear projection
$
 \pi_U \colon \bP \lb \bigwedge^2 W^\dual \rb
  \dashrightarrow \bP \lb \bigwedge^2 U^\dual \rb,
$
and
$
 B_1
$
is the closure of the inverse image of
\begin{align}
 \G(2, U)
  = \lc x \in \bP \lb \bigwedge^2 U^\dual \rb \relmid \dim \ker x = 3 \rc
  \subset \bP \lb \bigwedge^2 U^\dual \rb
\end{align}
by this rational map.
If $L$ is general,
then $B_2 \cap L$ is empty,
and $S \coloneqq B_1 \cap L$ projects isomorphically by $\pi_U$
to the section
$
 \G(2,U) \cap \pi_U(L) \subset \bP \lb \bigwedge^2 U^\dual \rb
$
of $\G(2,U)$
by the linear subspace $\pi_U(L)$
of dimension 5,
which is a del Pezzo surface of degree 5
embedded anti-canonically into $\pi_U(L) \simeq L$.
Let $\Xtilde \coloneqq \Bl_S X$ be the blow-up of $X$ along $S$,
whose exceptional divisor will be denoted by $E$.
It is known by \cite[Proposition 2]{MR749015} that
\begin{itemize}
 \item
the linear system $\left| 2 \Htilde - E \right|$ gives a birational morphism
$\varphitilde \colon \Xtilde \to \bP(U)$,
where $\Htilde$ is the total transform of the hyperplane section of $X$, and
 \item
the exceptional locus $F \subset \Xtilde$ of $\varphitilde$ is a smooth divisor,
whose image $G \subset \bP^4$ is isomorphic to the 5-point blow-up of $Y$.
\end{itemize}
The proof of \cite[Proposition 2]{MR749015},
which is based on \cite[Theorem 1]{MR571105},
actually shows that
the morphism
$
 \varphitilde
$
is the blow-up of
$
 \bP ^{ 4 }
$
along
$
 G
$.
Hence one has
\begin{align}\label{eq:motivic_formula_for_cubic_fourfold}
 \ld \Xtilde \rd
  &= [X] + \bL [S] \\
  &= [X] + \bL ( 1 + \bL + \bL^2 + 4 \bL)
\end{align}
and
\begin{align}
 \ld \Xtilde \rd
  &= \ld \bP^4 \rd + \bL [G] \\
  &= 1 + \bL + \bL^2 + \bL^3 + \bL^4 + \bL ([Y] + 5 \bL),
\end{align}
so that
\begin{align}\label{eq:decomposition_of_cubic_fourfold}
 [X] = 1 + \bL^2 + \bL^4 + \bL [Y].
\end{align}
This is compatible with the SOD
\eqref{eq:sod_for_cubic_fourfold} under the motivic measure \eqref{eq:BLL},
in the sense that the terms in the right hand side of \eqref{eq:motivic_formula_for_cubic_fourfold} are sent to the summands of \eqref{eq:sod_for_cubic_fourfold}.

\begin{remark} \label{rm:Ouchi}
We learned from Genki Ouchi that he independently obtained the formula
\eqref{eq:decomposition_of_cubic_fourfold} multiplied by 
$
 \bL^5(\bL-1)^2(\bL+1)
$
using an argument similar to \cite{Borisov:2014aa}.
See \cite[Section 2.6]{1612.07193v1} for similar results for other types of cubic fourfolds.
\end{remark}

\section{Abelian varieties}
 \label{sc:AV}

We prove \pref{th:DLAV} in this section.
First recall the following definition.

\begin{definition} \label{df:Grothendieck_ring}
%
The \emph{localized Grothendieck ring of varieties}
$
 K _{ 0 } \lb \Var / \bfk \rb [ \bL ^{ - 1 } ]
$
is the localization
of the Grothendieck ring of varieties
$
 K _{ 0 } \lb \Var / \bfk \rb
$
by the class $\bL$ of the affine line.
%
For each integer $i \in \bZ$, let
$
 \Fil _{ i } \subset K _{ 0 } \lb \Var / \bfk \rb [ \bL ^{ - 1 } ]
$
be the abelian subgroup spanned by the elements of the form
$
 [ X ] \cdot \bL ^{ - m },
$
where $ m \in \bZ $ is an integer and $X$ is a variety such that
$
 \dim X - m \le i
$.
The subgroups $ \lb \Fil _{ i } \rb _{ i \in \bZ }$ form an ascending filtration of
$
 K _{ 0 } \lb \Var / \bfk \rb [ \bL ^{ - 1 } ]
$
satisfying
$
 \Fil _{ i } \cdot \Fil _{ j } \subset \Fil _{ i + j }
$.
The \emph{completed Grothendieck ring of varieties} is the completion with respect to this filtration:
\begin{align}\label{eq:completion_map}
 K _{ 0 } \lb \Var / \bfk \rb [ \bL ^{ - 1 } ] \to \varprojlim _{ i \in \bZ }
 \frac{ K _{ 0 } \lb \Var / \bfk \rb [ \bL ^{ - 1 } ] }{ \Fil _{ i } }
 \eqqcolon \Khat _{ 0 } \lb \Var / \bfk \rb.
\end{align}
\end{definition}
%

Let $A _{ \bfk }$ be the group completion of the commutative monoid of isomorphism classes of algebraic group schemes over $\bfk$ whose connected components are abelian varieties and whose group of geometric connected components is a finitely generated group, where the binary operation $+$ of the monoid is defined by the direct sum;
$
 [ A ] + [ B ] \coloneqq [ A \oplus B ]
$.
When $\bfk = \bC$, by using the Bittner presentation of the Grothendieck ring \cite{MR2059227}
(which in turn is based on resolution of singularities \cite{MR0199184}
and weak factorization \cite{MR1896232}),
Ekedahl proved the following:

\begin{theorem}[{\cite[Theorem 3.4]{Ekedahl:2009aa}, cf.~also \cite[Appendix A]{MR3490596}}]
 \label{th:Ekedahl}
There is a homomorphism of abelian groups
\begin{align}
 \Pic _{ \bC } \colon \Khat _{ 0 } \lb \Var / \bC \rb \to A _{ \bC }
\end{align}
sending the class $[ X ]$ of a smooth proper variety $X$ to the class
$
 \cl{ \Pic ( X ) } = \cl{ \Pic ^{ 0 } ( X ) } + \cl{ \NS ( X ) }.
$
\end{theorem}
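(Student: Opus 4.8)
The plan is to construct the homomorphism first on $K_0(\Var/\bC)$ by means of the Bittner presentation \cite{MR2059227}, and then to propagate it to the completion. Since $\bC$ has characteristic zero, by \cite{MR2059227} (resting on \cite{MR0199184} and \cite{MR1896232}) the group $K_0(\Var/\bC)$ is the free abelian group on isomorphism classes $[X]$ of smooth projective $\bC$-varieties, modulo the relations $[\emptyset]=0$, $[X\sqcup Y]=[X]+[Y]$, and $[\Bl_Z X]+[Z]=[X]+[E]$ for a smooth closed subvariety $Z$ of a smooth projective $X$ with exceptional divisor $E$. So the first task is to attach to each smooth projective $X$ a class in $A_\bC$ compatibly with these three relations, and the second is to pass to $\Khat_0(\Var/\bC)$.

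For a smooth projective variety $X/\bC$, the Picard scheme $\Pic(X)$ is a commutative group scheme whose identity component $\Pic^0(X)$ is an abelian variety and whose group of connected components is $\NS(X)$, finitely generated by the theorem of the base; hence $[\Pic(X)]\in A_\bC$ is defined. Since $\Pic^0(X)(\bC)$ is divisible, one has $\mathrm{Ext}^1(\NS(X),\Pic^0(X))=0$ in commutative group schemes, so the extension $0\to\Pic^0(X)\to\Pic(X)\to\NS(X)\to0$ splits and $[\Pic(X)]=[\Pic^0(X)]+[\NS(X)]$ in $A_\bC$. I would set $\Pic_\bC([X])\coloneqq[\Pic(X)]$; compatibility with $[\emptyset]=0$ and with disjoint unions is immediate from $\Pic(\emptyset)=0$ and $\Pic(X\sqcup Y)=\Pic(X)\times\Pic(Y)$.

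The crux is the blow-up relation, where the two sides conspire. Let $Z\subset X$ be smooth closed of codimension $c$; the case $c=1$ being trivial, assume $c\ge2$, and write $\pi\colon\Bl_Z X\to X$ with exceptional divisor $\rho\colon E=\bP(N_{Z/X})\to Z$. From $R^q\pi_*\cO_{\Bl_Z X}=0$ for $q>0$ (and the analogue in degree two for the constant sheaf) one gets $\Pic^0(\Bl_Z X)\cong\Pic^0(X)$, while $\NS(\Bl_Z X)\cong\NS(X)\oplus\bZ$ with the new summand generated by $[E]$. On the other side $\rho$ is a Zariski-locally-trivial $\bP^{c-1}$-bundle, so $\Pic^0(E)\cong\Pic^0(Z)$ and $\NS(E)\cong\NS(Z)\oplus\bZ$ with the new summand generated by the relative hyperplane class. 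Therefore
\begin{align*}
 \Pic_\bC([\Bl_Z X])-\Pic_\bC([E])
  &=\bigl([\Pic^0(X)]+[\NS(X)]+[\bZ]\bigr)-\bigl([\Pic^0(Z)]+[\NS(Z)]+[\bZ]\bigr)\\
  &=\Pic_\bC([X])-\Pic_\bC([Z]),
\end{align*}
the two spurious copies of $[\bZ]$ cancelling. This yields a well-defined homomorphism $K_0(\Var/\bC)\to A_\bC$ with value $[\Pic(X)]$ on smooth proper $X$.

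The remaining step, which I expect to be the main obstacle, is the extension to $\Khat_0(\Var/\bC)$. This is genuinely delicate and not a formal localization: $A_\bC$ carries no natural topology, so one cannot merely pass to the limit, and already the behaviour under inverting $\bL=[\bA^1]$ is subtle. Indeed, writing $\bL\cdot[X]=[\bP^1\times X]-[X]$ and using $\Pic(\bP^1\times X)=\bZ\times\Pic(X)$ one finds $\Pic_\bC(\bL\cdot x)=\Phi(x)\cdot[\bZ]$, where $\Phi\colon K_0(\Var/\bC)\to\bZ$ is the ring homomorphism counting geometric connected components (well-defined on the Bittner presentation because a projective bundle has the same components as its base, and satisfying $\Phi(\bL)=0$); in particular $\Pic_\bC$ is annihilated by $(\bL^2)$ but not by $(\bL)$. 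Following Ekedahl, one must then show that $\Pic_\bC$ vanishes on the kernel of $K_0(\Var/\bC)\to\Khat_0(\Var/\bC)$ --- which contains in particular all $\bL$-power torsion --- so that it descends, and then that the induced map extends over the whole completion. This vanishing on the deep terms of the dimension filtration $\Fil_\bullet$ is the technical heart of the argument; the Picard-scheme computations above are, by contrast, the formal part.
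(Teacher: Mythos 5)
The paper does not reprove this statement; it simply cites Ekedahl's Theorem 3.4, noting beforehand that Ekedahl's argument rests on the Bittner presentation. Your construction of $\Pic_\bC$ on $K_0(\Var/\bC)$ via Bittner is therefore exactly the right opening move, and I checked it: the splitting $\Pic(X)\cong\Pic^0(X)\oplus\NS(X)$ via divisibility of $\Pic^0(X)(\bC)$ is fine, the blow-up bookkeeping with the extra $\bZ$-summands cancelling on both sides is correct (and remains correct if $Z$ or $X$ is disconnected, with a copy of $[\bZ]$ per component of $Z$), and the auxiliary observations that $\Pic_\bC(\bL\cdot x)=\Phi(x)\cdot[\bZ]$ and that $\Pic_\bC$ kills the ideal $(\bL^2)$ but not $(\bL)$ are accurate and genuinely useful sanity checks.

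The gap is the one you flag yourself: the passage from $K_0(\Var/\bC)$ to $\Khat_0(\Var/\bC)$. You correctly decompose this into (i) showing $\Pic_\bC$ vanishes on $\ker\!\bigl(K_0\to\Khat_0\bigr)$, and (ii) extending the descended map from the image of $K_0$ to all of the completion, but you supply no argument for either. I want to underline that (i) is already nontrivial in a way your sketch slightly understates: $\Pic_\bC$ must in particular vanish on the kernel of the localization $K_0\to K_0[\bL^{-1}]$, i.e.\ on all $\bL$-power torsion, and nothing in the Bittner computation gives this. From $\bL^n x=0$ one only learns $\Pic_\bC(\bL^n x)=0$, which by your own formulas is automatic for $n\ge2$ and says nothing about $\Pic_\bC(x)$. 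Likewise for (ii), the target $A_\bC$ is discrete, so one must show that $\Pic_\bC$ (once extended to $K_0[\bL^{-1}]$) is eventually constant along the dimension filtration $\Fil_\bullet$, i.e.\ factors through some finite quotient $K_0[\bL^{-1}]/\Fil_i$; this is the real content of Ekedahl's Theorem 3.4 and is precisely what your proposal leaves to the citation. Since the paper also leaves the entire proof to that citation, your reconstruction goes further than the text it accompanies, but it is not a complete proof: the ``technical heart'' you name is genuinely where the theorem lives, and one cannot close it by the formal Picard-scheme computations alone.
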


Ekedahl also proved the following:

\begin{proposition}[{\cite[Proposition 3.6]{Ekedahl:2009aa}}]
 \label{pr:Ekedahl2}
Assume that $A$, $B$ and $C$ are abelian varieties over $\bC$
satisfying $A \oplus C \simeq B \oplus C$.
Then
$
 \Hom \lb A, B \rb
$
is a locally free right module of rank 1 over $R \coloneqq \End(A)$
and the natural morphism of abelian varieties
$
 \Hom \lb A, B \rb \otimes_R A \to B
$
is an isomorphism.
\end{proposition}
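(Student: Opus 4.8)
The plan is to translate the statement into a question about finitely generated modules over the order $R \coloneqq \End(A)$ in the semisimple $\bQ$-algebra $\End^0(A) \coloneqq R \otimes_{\bZ} \bQ$, to prove local freeness by a prime-by-prime Krull--Schmidt argument, and finally to identify the evaluation morphism. First I would record that the functor $\Hom(A,-)$ from abelian varieties over $\bC$ to finitely generated right $R$-modules is additive with $\Hom(A,A) = R$, and that, since the category of abelian varieties up to isogeny is semisimple by Poincar\'e complete reducibility (hence Krull--Schmidt), the hypothesis $A \oplus C \cong B \oplus C$ already forces $A$ and $B$ to be isogenous. Consequently $\Hom(A,B) \otimes_{\bZ} \bQ \cong \End^0(A)$ as right $\End^0(A)$-modules, so $I \coloneqq \Hom(A,B)$ is a full $R$-lattice in $\End^0(A)$ (finitely generated, $\bZ$-torsion-free, of rank $\dim_{\bQ} \End^0(A)$). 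Applying $\Hom(A,-)$ to the given isomorphism yields an isomorphism of right $R$-modules $R \oplus Q \cong I \oplus Q$ with $Q \coloneqq \Hom(A,C)$.

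The key point is to deduce from $R \oplus Q \cong I \oplus Q$ that $I$ is locally free of rank one over $R$. For each prime $p$, the ring $R_p \coloneqq R \otimes_{\bZ} \bZ_p$ is module-finite over $\bZ_p$, hence a complete semilocal Noetherian ring, and over such a ring finitely generated modules satisfy the Krull--Schmidt theorem: the endomorphism ring of a finitely generated module is again module-finite over $\bZ_p$, hence semiperfect, so an indecomposable module has local endomorphism ring. In particular cancellation of finitely generated modules holds over $R_p$, so localizing $R \oplus Q \cong I \oplus Q$ at $p$ gives $I \otimes_{\bZ} \bZ_p \cong R_p$. Since this holds for every $p$ and $I \otimes_{\bZ} \bQ \cong \End^0(A)$, the $R$-module $I$ is locally free of rank one, in particular projective. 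The same argument applied to $\Hom(-,B)$ and the induced isomorphism $\End(B) \oplus \Hom(C,B) \cong \Hom(A,B) \oplus \Hom(C,B)$ of left $\End(B)$-modules shows that $I$ is also locally free of rank one as a left $\End(B)$-module, and a short computation with the two module structures then shows that the natural map $\End(B) \to \End_R(I)$ is an isomorphism.

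It remains to see that the natural morphism $\mathrm{ev} \colon \Hom(A,B) \otimes_R A \to B$, $f \otimes a \mapsto f(a)$, is an isomorphism. Because $I$ is projective over $R$, the abelian variety $I \otimes_R A$ is well defined (a direct summand of some $A^{\oplus n}$, using that abelian varieties form an idempotent-complete category), one has a canonical isomorphism $\Hom(A, I \otimes_R A) \cong I$ compatible with $\mathrm{ev}$, so $\Hom(A,\mathrm{ev})$ is an isomorphism; and rationally $\mathrm{ev}$ is the canonical isomorphism $\Hom^0(A,B) \otimes_{\End^0(A)} A \xrightarrow{\sim} B$, so $\mathrm{ev}$ is at least an isogeny. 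To conclude it suffices to lift $\mathrm{id}_B$ along $\mathrm{ev}$, since a section of an isogeny is a closed immersion whose image, being connected of the same dimension, is everything; equivalently one must show $\Hom(A,B) \circ \Hom(B,A) = \End(B)$, i.e.\ that $B$ is a direct summand of $A^{\oplus n}$ for some $n$. This is the step I expect to be the main obstacle: it is not formal and genuinely uses the isomorphism $A \oplus C \cong B \oplus C$ rather than mere isogeny of $A$ and $B$ --- for instance, for an elliptic curve $E$ without complex multiplication and a torsion point $P$, the $\End$-modules $\Hom(E, E/\langle P \rangle)$ and $\Hom(E/\langle P \rangle, E)$ are free of rank one while $E \not\cong E/\langle P \rangle$. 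I would obtain it by a local analysis: applying the cancellation above over $\End(A \oplus C) \otimes_{\bZ} \bZ_p$ shows that $\Hom(A \oplus C, A)$ and $\Hom(A \oplus C, B)$ become isomorphic projective modules after $\otimes_{\bZ} \bZ_p$ for every $p$; transporting this through the $\ell$-adic Tate modules ($\ell \neq p$) and the $p$-divisible groups at $p$, together with their endomorphism actions, forces a local free generator of $\Hom(A,B)$ over $R$ to induce an isomorphism $A[p^{\infty}] \xrightarrow{\sim} B[p^{\infty}]$ for each $p$, whence $\mathrm{ev}$ is an isomorphism.
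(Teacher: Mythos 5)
The paper does not actually prove this statement: it is quoted verbatim from Ekedahl's paper, and the only thing the authors add is the subsequent remark that Ekedahl's proof "is based on Tate's isogeny theorem for abelian varieties over a field which is finitely generated over its prime field" (Faltings/Zarhin), with pointers to Faltings and to Faltings--W\"ustholz. So there is no in-paper argument to line your attempt up against; I can only assess your proposal on its own terms and against Ekedahl's announced strategy.

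Your steps 1--3 are correct and cleanly executed: the reduction to right $R$-modules, the observation that $A$ and $B$ are isogenous by Poincar\'e reducibility, the Krull--Schmidt/cancellation argument over $R_p = R\otimes\bZ_p$ (a ring finite over $\bZ_p$, so semiperfect, so cancellation holds for finitely generated modules), and the identification $\Hom(A,P\otimes_R A)\cong P$ for $P$ a finitely generated projective right $R$-module. You also correctly isolate the genuinely nontrivial point, which your example with $E$ and $E/\langle P\rangle$ illustrates well: local freeness of $\Hom(A,B)$ does not by itself make the evaluation map an isomorphism, and the full force of $A\oplus C\cong B\oplus C$ must enter.

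Where your write-up is weakest is exactly there, and your sketch for the last step is both muddled in its language and thinner than it needs to be. The phrase ``$\ell$-adic Tate modules ($\ell\neq p$) and the $p$-divisible groups at $p$'' belongs to positive characteristic; over $\bC$ you should simply use $T_p(A)=H_1(A,\bZ)\otimes\bZ_p$ for every $p$. More importantly, the word ``forces'' hides the only real work, and you should spell it out. Here is the point. Set $D=A\oplus C$, $S=\End(D)$, and write $A=eD$, $B=e'D$ for idempotents $e,e'\in S$ (the second coming from the given isomorphism $\psi\colon A\oplus C\simto B\oplus C$, whose inverse also embeds a copy of $C$ as $(1-e')D\cong C$). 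Then $\Hom(D,A)\cong eS$, $\Hom(D,B)\cong e'S$, and $(1-e)S\cong(1-e')S$, so $eS\oplus(1-e)S\cong e'S\oplus(1-e)S$. Krull--Schmidt over $S_p$ gives an isomorphism of right $S_p$-modules $(eS)_p\simto(e'S)_p$, and any such isomorphism is left multiplication by some $\gamma\in e'S_pe=\Hom(A,B)\otimes\bZ_p$ admitting a two-sided inverse $\delta\in eS_pe'=\Hom(B,A)\otimes\bZ_p$ with $\delta\gamma=e$, $\gamma\delta=e'$. Acting on $T_pD$, this says precisely that $\gamma_*\colon T_pA\to T_pB$ is an isomorphism, and the identity $f=\gamma\cdot(\delta f)$ shows $\gamma$ generates $I_p$ over $R_p$. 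Under the induced identification $I_p\otimes_{R_p}T_pA\cong T_pA$, the map $\mathrm{ev}_p$ becomes $\gamma_*$, hence is an isomorphism for every $p$, hence $\ker(\mathrm{ev})$ has trivial $p$-part for every $p$ and is therefore zero. Once this is written out, the argument is complete and the detour through ``$B$ is a direct summand of $A^{\oplus n}$'' is unnecessary (though it is a valid reformulation). Notice that, at least over $\bC$, this purely module-theoretic route does not seem to invoke Tate's isogeny theorem explicitly, in contrast with the paper's comment on Ekedahl's proof; if you want to match Ekedahl's announced method, you would instead spread out to a finitely generated field and use Faltings to identify $\Hom(A,B)\otimes\bZ_\ell$ with $\Hom_{\mathrm{Gal}}(T_\ell A,T_\ell B)$, then run Krull--Schmidt in the category of $\ell$-adic Galois lattices. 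Either way, your plan is sound once the ``transporting through Tate modules'' step is made precise.
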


The proof of \cite[Proposition 3.6]{Ekedahl:2009aa} is based on the Tate's isogeny theorem for abelian varieties over a field which is finitely generated over its prime field. This is first shown in \cite{MR718935} over number fields, and is generalized later (see \cite[Chapter VI \S 3 Theorem 1 b)]{MR766568}. See also \cite[Chapter IV \S 1 Corollary 1.2]{MR766568} and its proof).

\begin{corollary} \label{cr:main_abelian}
Let $X$ and $Y$ be smooth projective varieties over $\bC$ such that
$
 \End ( \Pic ^{ 0 } ( X ) ) \simeq \bZ.
$
If
$
 [X] = [Y]
$
holds in
$
 \Khat _{ 0 } \lb \Var / \bC \rb
$, then $\Pic ^{ 0 } ( X )$ is isomorphic to $\Pic ^{ 0 } ( Y )$.
\end{corollary}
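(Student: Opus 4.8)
The plan is to push the hypothesis through Ekedahl's homomorphism $\Pic_\bC$ of \pref{th:Ekedahl} and then to extract the conclusion from \pref{pr:Ekedahl2}. Applying $\Pic_\bC \colon \Khat_0(\Var/\bC) \to A_\bC$ to the equality $[X] = [Y]$ in $\Khat_0(\Var/\bC)$ gives $[\Pic(X)] = [\Pic(Y)]$ in $A_\bC$. By definition $A_\bC$ is the group completion of a commutative monoid of isomorphism classes of group schemes under direct sum, so this equality means that there is a group scheme $C$ of the allowed type (identity component an abelian variety, geometric component group finitely generated) with $\Pic(X) \oplus C \cong \Pic(Y) \oplus C$ as group schemes over $\bC$.

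Next I would pass to identity components. Since the identity-component functor commutes with finite direct sums, the previous isomorphism restricts to $\Pic^0(X) \oplus C^0 \cong \Pic^0(Y) \oplus C^0$. Here $C^0$ is an abelian variety, and $\Pic^0(X)$ and $\Pic^0(Y)$ are abelian varieties because $X$ and $Y$ are smooth and projective over $\bC$; so $A \coloneqq \Pic^0(X)$, $B \coloneqq \Pic^0(Y)$ and $C^0$ satisfy the hypothesis of \pref{pr:Ekedahl2}. That proposition then tells us that $\Hom(\Pic^0(X), \Pic^0(Y))$ is a locally free right module of rank one over $R \coloneqq \End(\Pic^0(X))$ and that the evaluation morphism $\Hom(\Pic^0(X), \Pic^0(Y)) \otimes_R \Pic^0(X) \to \Pic^0(Y)$ is an isomorphism.

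To finish, I would invoke the hypothesis $R = \End(\Pic^0(X)) \cong \bZ$: a locally free (equivalently, finitely generated projective) module of rank one over $\bZ$ is free of rank one, so $\Hom(\Pic^0(X), \Pic^0(Y)) \cong \bZ$ as a right $\bZ$-module, say generated by some $\phi \colon \Pic^0(X) \to \Pic^0(Y)$. Feeding this into the evaluation isomorphism yields $\Pic^0(Y) \cong \bZ \otimes_{\bZ} \Pic^0(X) \cong \Pic^0(X)$, the isomorphism being realized by $\phi$ itself, which is the desired conclusion.

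There is no genuinely hard step once \pref{th:Ekedahl} and \pref{pr:Ekedahl2} are available; the only places requiring care are the bookkeeping that turns an equality in the group completion $A_\bC$ into an honest isomorphism of group schemes after adding a common summand, and the verification that passing to identity components stays within the hypotheses of \pref{pr:Ekedahl2} (notably that $C^0$ is an abelian variety). The hypothesis $\End(\Pic^0(X)) \cong \bZ$ enters precisely to promote ``locally free of rank one'' to ``free of rank one'', which is what makes the final identification unconditional.
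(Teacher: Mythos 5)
Your proof is correct and follows essentially the same route as the paper: apply Ekedahl's homomorphism $\Pic_\bC$ to get $[\Pic(X)] = [\Pic(Y)]$ in $A_\bC$, interpret this as an isomorphism after adding a common summand, pass to neutral components, and then invoke \pref{pr:Ekedahl2} together with the hypothesis $\End(\Pic^0(X)) \cong \bZ$ to cancel. The only difference is that you spell out the final step (free rank-one module over $\bZ$ and the evaluation map) in slightly more detail than the paper does.
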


\begin{proof}
\pref{th:Ekedahl} gives the equality
$
 [ \Pic ( X ) ] = [ \Pic ( Y ) ]
$
in the group
$
 A _{ \bC }
$.
This is equivalent to saying that there exists a group scheme $G$ whose neutral component is an abelian variety
and the group of components is a finitely generated abelian group such that
\begin{align}
 \Pic ( X ) \times G \simeq \Pic ( Y ) \times G.
\end{align}
By taking the neutral components, we obtain the isomorphism
\begin{align}\label{eq:equality_in_A}
 \Pic ^{ 0 } ( X ) \times G ^{ 0 } \simeq \Pic ^{ 0 } ( Y ) \times G ^{ 0 }.
\end{align}
Now \pref{pr:Ekedahl2} gives an isomorphism
$
 \Pic ^{ 0 } ( X ) \simeq \Pic ^{ 0 } ( Y )
$,
since a locally free $\bZ$-module of rank 1
is unique up to isomorphism.
\end{proof}

\begin{remark}
A ring $R$ is said to have the \emph{cancellation property}
if an isomorphism of finitely generated one-sided modules
$
 A \oplus C \simeq B \oplus C
$
over $R$ implies $A \simeq B$ whenever $C$ is a projective module.
In \pref{cr:main_abelian},
it suffices to assume that $\End \lb \Pic^0(X) \rb$
is a hereditary ring with the cancellation property.
\end{remark}


\begin{lemma} \label{lm:existence}
For any integer
$
 g \ge 2,
$
there exists an abelian $g$-fold
$
 A
$
such that
$
 \End \lb A \rb = \bZ
$
and
$
 A \not\simeq \Ahat \coloneqq \Pic ^{ 0 } \lb A \rb
$.
\end{lemma}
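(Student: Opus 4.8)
To prove \pref{lm:existence} we must produce, for each $g \ge 2$, an abelian $g$-fold $A$ with $\End(A) = \bZ$ that is not isomorphic to its dual $\Ahat$. The standard strategy is to work with \emph{principally polarizable} moduli and argue that the locus of abelian varieties \emph{admitting} a principal polarization is a proper (hence very thin) sublocus once we impose a suitable extra structure, while the generic abelian variety in a well-chosen family still has $\End = \bZ$. Concretely, I would fix an auxiliary polarization type that is \emph{not} equivalent to a principal one (e.g.\ of type $(1,1,\dots,1,d)$ with $d > 1$, or any non-principal elementary divisor type) and consider the moduli space $\cA_g^{(\delta)}$ of abelian $g$-folds with polarization of that type. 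For $A$ very general in this moduli space one has $\End(A) = \bZ$ by the usual Lefschetz-type argument (the Néron--Severi group of the very general member is generated by the given polarization, and $\End(A) = \bZ$ follows since any nontrivial endomorphism would force extra Hodge classes). On the other hand, if such a very general $A$ were isomorphic to $\Ahat$, then pulling back the canonical polarization datum would equip $A$ with a principal polarization; but then $A$ would carry \emph{two} polarizations of different types, forcing $\mathrm{rk}\,\NS(A) \ge 2$, contradicting $\NS(A) = \bZ$. Hence $A \not\cong \Ahat$.

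\textbf{Key steps, in order.} First, for fixed $g \ge 2$ and a chosen non-principal polarization type $\delta = (d_1,\dots,d_g)$ with $d_1 = \dots = d_{g-1} = 1 < d_g$, recall that the coarse moduli space $\cA_g^{(\delta)}$ is irreducible of dimension $g(g+1)/2$. Second, prove that the very general point $[A] \in \cA_g^{(\delta)}$ satisfies $\NS(A) \cong \bZ$, generated by the class $\lambda$ of the polarization: this is a countability-of-Hodge-loci argument, since the locus where an extra integral $(1,1)$-class appears is a countable union of proper closed subvarieties (a Noether--Lefschetz type statement valid for $g \ge 2$). Third, deduce $\End(A) = \bZ$: any $\varphi \in \End(A)$ gives, via $\lambda$, a symmetric element of $\End(A) \otimes \bQ$ acting on $H^1$, and with $\NS(A)$ of rank one the Rosati-fixed part of $\End(A)\otimes\bQ$ is one-dimensional, so $\End(A) \otimes \bQ = \bQ$ and $\End(A) = \bZ$ (using that $A$ is not isogenous to a product, which again holds very generally). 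Fourth, suppose $f \colon A \xrightarrow{\sim} \Ahat$ is an isomorphism. The dual $\Ahat$ carries no distinguished polarization a priori, but $A$ already has the polarization $\phi_\lambda \colon A \to \Ahat$ of type $\delta$; composing, $f^{-1} \circ \phi_\lambda \in \End(A)$, which is forced to be multiplication by an integer, so $f$ itself differs from $\phi_\lambda$ by an integer and thus has type a scalar multiple of $\delta$ — in particular $f$ is \emph{not} an isomorphism unless $\delta$ is trivial, contradiction. (Equivalently: an isomorphism $A \cong \Ahat$ is the same as a principal polarization, which would be a class in $\NS(A)$ of elementary divisors $(1,\dots,1)$, incompatible with $\NS(A) = \bZ\cdot\lambda$, $\lambda$ of type $\delta \neq (1,\dots,1)$.)

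\textbf{The main obstacle.} The crux is Step Four: one must be careful that $A \cong \Ahat$ as \emph{abstract} abelian varieties — not merely isogenous, and not respecting any polarization — still forces a contradiction. The clean way is the $\NS$-theoretic reformulation: isomorphisms $A \xrightarrow{\sim} \Ahat$ correspond bijectively to principal polarizations on $A$ (a principal polarization \emph{is} such an isomorphism that is symmetric and positive; symmetry and positivity can be arranged or argued away using $\End(A) = \bZ$, since any isomorphism $A \to \Ahat$ composed with $\phi_\lambda^{-1}$-then-symmetrized lands in $\bZ$). Once every isomorphism $A \to \Ahat$ is pinned to the $\bZ$-span of $\phi_\lambda$, non-triviality of $\delta$ finishes it. A secondary subtlety is confirming that the \emph{very general} member really has trivial $\NS$ and trivial $\End$ simultaneously; both follow from the same Hodge-locus countability input, but one should cite it carefully (for $g=2$ this is classical for $(1,d)$-polarized abelian surfaces with $d\ge 2$; for general $g$ it is the statement that the Mumford--Tate group of the very general member of $\cA_g^{(\delta)}$ is the full $\GSp_{2g}$). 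I would organize the write-up so that this Hodge-theoretic input is invoked once and the rest is formal.
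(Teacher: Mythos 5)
Your approach is sound and arrives at the same conclusion, but it is genuinely different from the route the paper actually takes. You argue via the moduli space $\mathcal{A}_g^{(\delta)}$ of $(1,\dots,1,d)$-polarized abelian $g$-folds: for the very general member, $\NS(A)\cong\bZ$ and $\End(A)=\bZ$ by a Mumford--Tate/Noether--Lefschetz argument, and an isomorphism $A\xrightarrow{\sim}\Ahat$ is then ruled out because it would (after symmetrizing, using $\End(A)=\bZ$ so that the Rosati involution is trivial and $\Hom(A,\Ahat)\otimes\bQ\cong\bQ$) be of the form $\pm\phi_L$ with $L\in\NS(A)=\bZ\lambda$, hence of degree $k^{2g}d^{2}>1$. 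The paper instead starts from the Jacobian $J$ of a very general genus-$g$ curve, which is principally polarized with $\End(J)=\bZ$, and forms $A:=J/G$ for a finite subgroup $G$ whose order $n$ is \emph{not} a $g$-th power. If $A\cong\Ahat$, the composite $J\xrightarrow{q}A\simeq\Ahat\xrightarrow{\widehat{q}}\widehat{J}\simeq J$ would be an endomorphism of $J$ of degree $n^2$; but every endomorphism of $J$ is $[m]$ for some $m\in\bZ$, of degree $m^{2g}$, forcing $n=\pm m^g$, contradicting the choice of $n$. And $\End(A)=\bZ$ follows because the isogeny $q$ induces $\End(A)\otimes\bQ\cong\End(J)\otimes\bQ=\bQ$.

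Both arguments are correct, but they buy different things. The paper's is more elementary and concrete: it only needs the classical fact that a very general Jacobian has $\End=\bZ$ (which it cites), plus a one-line degree count, and it produces an explicit $A$ as an isogeny quotient. Your version needs the bigger input that the very general member of $\mathcal{A}_g^{(\delta)}$ has full Mumford--Tate group $\GSp_{2g}$ (so that $\NS$ has rank $1$ and $\End=\bZ$), which is true but would need a careful citation, and it also needs the slightly delicate observation you flag in your ``main obstacle'' paragraph — that with $\End(A)=\bZ$, \emph{every} isomorphism $A\to\Ahat$, not just the polarizing ones, is pinned down to $\pm\phi_\lambda$. As stated, your Step Four claim ``isomorphisms $A\xrightarrow{\sim}\Ahat$ correspond bijectively to principal polarizations'' is false in general; it becomes true here precisely because of $\End(A)=\bZ$, and you should make that reduction (symmetrize via $\widehat{f}=\pm f$, identify $\Hom(A,\Ahat)$ with $\NS(A)\cong\bZ$, then count degrees) explicit rather than just gesture at it. With that cleaned up, your proof is complete; it is just less economical than the paper's isogeny-quotient construction.
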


\begin{proof}
The authors learned the following construction of $A$ from the answer by Bjorn Poonen to a question in MathOverflow \cite{Poonen}. For the sake of completeness, here we give an explanation with details (with a minor change).

Let $J$ be the Jacobian of a very general smooth projective curve of genus $g$.
As explained in \cite[Lecture 15]{Dolgachev_Milan}, by applying \cite[p.\,359 Lemma]{MR770932} for the case $d = 2$ and \cite[Theorem 11.5.1]{MR2062673} we obtain
$
 \End ( J ) = \bZ
$.
Note also that $J$ is isomorphic to its dual, since it is the Jacobian of a curve and
hence is a principally polarized abelian variety.
Let $G \subset J$ be a finite subgroup whose order $n$ is not a $g$-th power of an integer.
Then $A \coloneqq J / G$ is never isomorphic to its dual, since otherwise the composition
\begin{align}
 J \xrightarrow{q} A \simto \Ahat \xrightarrow{\qhat} \Jhat \simto J
\end{align}
gives an endomorphism of $J$ of degree $n ^{ 2 }$,
contradicting
$
 \End ( J ) = \bZ
$
and the choice of $n$.
The isogeny
$
 q \colon J \to A
$
induces the isomorphism
$
 \End ( A ) \otimes \bQ \simeq \End ( J ) \otimes \bQ = \bQ,
$
and hence one has
$
 \End ( A ) = \bZ
$.
\end{proof}

\pref{th:DLAV} is an immediate consequence of \pref{cr:main_abelian} and \pref{lm:existence}.
Note that the existence of one example
in \pref{lm:existence} implies that a very general abelian variety
also is an example.

We can also construct another class of examples of D-equivalent varieties
whose classes are distinct in
$
 \Khat _{ 0 } \lb \Var / \bC \rb
$.

\begin{corollary}
 \label{cr:nonDL2}
Let
$
 A
$
be a complex abelian surface such that
$
 A \not\simeq \Ahat
$
and
$
 \End \lb A \rb = \bZ
$.
Take a natural number
$
 n \ge 1
$
and consider the Hilbert schemes of $n$-points
$
 A ^{ [ n ] }
$
and
$
 \Ahat ^{ [ n ] }
$.
Then the pair
$
 \lb A ^{ [ n ] }, \Ahat ^{ [ n ] } \rb
$
gives a negative answer to \pref{pb:naive}.
\end{corollary}

\begin{proof}
By \cite[Proposition 8]{MR2353249}, the derived equivalence between
$
 A
$
and
$
 \Ahat
$
induces a derived equivalence between
$
 A ^{ [ n ] }
$
and
$
 \Ahat ^{ [ n ] }
$.
On the other hand, the summation maps
$
 s \colon A ^{ [ n ] } \to A
$
and
$
 \shat \colon \Ahat ^{ [ n ] } \to \Ahat
$
are the Albanese maps of
$
 A ^{ [ n ] }
$
and
$
 \Ahat ^{ [ n ] }
$,
respectively.
Hence it follows from \pref{cr:main_abelian} that
$
 \cl{A ^{ [ n ] }} \neq \cl{\Ahat ^{ [ n ] }} \in \Khat _{ 0 } \lb \Var / \bC \rb.
$
\end{proof}

\section{Variations of \pref{pb:naive}} \label{sc:modifications}

Now that we found conterexamples to \pref{pb:naive},
we discuss its possible modifications
in this section.
The guiding principle is to weaken the assertion of \pref{pb:naive} without losing meaningful implications such as the D-invariance of the Hodge numbers and the number of rational points over finite fields.

Before we discuss the modifications, we point out that
this problem is also interesting from the point of view of the mirror symmetry.
A pair $(X, \Xv)$ of Calabi--Yau $n$-folds is said to be a \emph{topological mirror pair}
if
\begin{align} \label{eq:tms}
 h^{p,q}(X) = h^{n-p,q}(\Xv)
\end{align}
for $0 \le p,q \le n$,
so that if $(X,Y)$ and $(\Xv, \Yv)$ are Fourier--Mukai pairs,
and $(X, \Xv)$ and $(Y, \Yv)$ are topological mirror pairs,
then \pref{eq:tms}
together with \pref{eq:vertical_sum}
implies
\begin{align}
 \sum_{p+q=i} h^{p,q}(X)
  &= \sum_{p+q=i} h^{n-p,q}(\Xv) \\
  &= \sum_{p+q=i} h^{n-p,q}(\Yv) \\
  &= \sum_{p+q=i} h^{p,q}(Y)
\end{align}
for $0 \le i \le 2n$.
In other words,
mirror symmetry exchanges the horizontal (or Hodge filtration) direction
and the vertical (or weight) direction in the Hodge diamond,
so that if the mirrors of Fourier--Mukai partners are Fourier--Mukai partners,
then not only the vertical sums but also the horizontal sums will be preserved.
Moreover,
if $(X, Y)$ is a Fourier--Mukai pair,
and $(X, Z)$ and $(Y,Z)$ are topological mirror pairs,
then one has
\begin{align}
 h^{p,q}(X)
  = h^{n-p,q}(Z)
  = h^{p,q}(Y).
\end{align}
This is the case, for example,
for the Pfaffian--Grassmannian pair of Calabi--Yau 3-folds
\cite{MR1775415} mentioned in Introduction.

We also note that the coincidence of the Hasse--Weil zeta functions is stronger than the coincidence of the Hodge numbers in the following sense.

\begin{proposition}\label{pr:conjecture in characteristic p implies conjecture in characteristic 0}
  Assume in dimension $d$ and over finite fields that the D-equivalence implies the coincidence of the Hasse--Weil zeta functions.
  Then it follows in the same dimension $d$ and over \( \bC \) that the D-equivalence implies the coincidence of the Hodge numbers.
\end{proposition}

\begin{proof}
\begin{step}
We reduce the proof to the case where everything is defined over a number field $K$.
Choose projective embeddings of $X$ and $Y$ into a projective space $\bP _{ \bC }$, so that they are defined by systems of homogeneous polynomials with coefficients in $\bC$.
We choose such embeddings that
$
 H ^{ 1 } ( \bP _{ \bC }, I _{ X / \bP _{ \bC } } ( i ) ) = 0 = H ^{ 1 } ( \bP _{ \bC }, I _{ Y / \bP _{ \bC } } ( i ) )
$
holds for all $i > 0$, so that
$
 H ^{ 0 } \lb \bP _{ \bC }, \cO _{ \bP _{ \bC } } ( i ) \rb
 \to
 H ^{ 0 } \lb X, \cO _{ X } ( i ) \rb
$
is surjective for all $i \ge 0$ (resp. for $Y$).
Next we replace the Fourier--Mukai kernel of the D-equivalence between $X$ and $Y$
with a complex $E$
consiting of
direct sums of line bundles of the form
$
 \cO _{ X } ( i ) \boxtimes \cO _{ Y } ( i )
$
for some integer $i$. By the surjectivity we just mentioned, the differentials of the perfect complex are represented by matrices whose terms are polynomials over $\bC$.
Thus we see that there exists a subring $R \subset \bC$ of finite type over $\bQ$ such that
\begin{itemize}
\item
$X, Y$ are the base changes by
$
 Q ( R ) \subset \bC
$
of the fibers over the generic point
$
 \Spec Q ( R ) \to \Spec R
$
of the schemes $\cX$ and $\cY$ which are defined in the projective space $\bP _{ R }$ over $R$ by a system of homogeneous polynomials with coefficients in $R$, and

\item
the Fourier--Mukai kernel $E$ is the base change of the restriction to the generic fiber of a perfect complex $\cE$ on
$
 \bP _{ R } \times _{ R } \bP _{ R }
$.
\end{itemize}
Now consider the variety
$
 T = \Spec R
$
in characteristic zero.
Since $X$ and $Y$ are smooth and projective over the generic point of $T$, there exists an open dense subset $ U \subset T $ over which
$\cX$ and $\cY$ are smooth and projective.
Since the Hodge numbers are the same for all the fibers of a smooth projective morphism between varieties in characteristic zero,
we can compare the Hodge numbers of $X$ and $Y$ on the fibers of any closed point of $U$ (see the similar argument in the proof of \cite[Proposition 5.1]{MR2019152}).

Take the the right adjoint kernel
$
 \cE _{ R }
 =
 \cE ^{ \vee } \otimes p _{ \cX } ^{ * } \omega _{ \cX / R } [ \dim X ]
$
of $\cE$ (see, say, \cite[Definition 5.7]{MR2244106}).
Consider the adjunction unit map
\begin{align}
 \mu \colon \cO _{ \Delta _{ \cX / R } } \to \cE _{ R } * \cE,
\end{align}
where
$
 *
$
denotes the convolution of kernels
$
 p _{ \cX, \cX * } \lb p _{ \cY, \cX } ^{ * } \cE _{ R } \otimes p _{ \cX, \cY } ^{ * } \cE \rb
$,
so that
\begin{align}
 \Phi ^{ \cX \to \cX } _{ \cE _{ R } * \cE }
 =
 \Phi ^{ \cY \to \cX } _{ \cE _{ R } } \circ \Phi ^{ \cX \to \cY } _{ \cE }.
\end{align}
Since
$
 \Phi ^{ X \to Y } _{ E }
$
is fully faithful, it follows that the support of the cone of $\mu$ does not intersect the generic fiber
$
 \cX _{ Q ( R ) }
$.
Since $\cX$ is proper over $T$, by removing the image of the support of the cone of $\mu$ if necessary, we may assume that
$
 \mu
$
is an isomorphism over $U$.
The flatness of $\cY$ over $U$ implies that
$
 p _{ \cX, \cX * }
$
commutes with restriction over any point $u \in U$,
so that one can easily verify
$
 \lb \cE _{ R } * \cE \rb | _{ u }
 \simeq
 \cE _{ R } | _{ u } * \cE | _{ u }
 \simeq
 \lb \cE | _{ u } \rb _{ R } * \cE | _{ u }
$.
Thus we see that
$
 \Phi _{ \cE | _{ u } }
$
is fully faithful for any
$
 u \in U
$.
Combined with
similar arguments for the adjunction counit map, one can deduce that 
$
 \Phi _{ \cE | _{ u } }
$
is an equivalence for any
$
 u \in U
$
after removing some non-dense closed subset from $U$ if necessary.
Therefore we can take the number field $K$ to be the residue field of any closed point of $U$.
\end{step}

\begin{step}
Once everything is defined over a number field $K$, then $X$, $Y$ extend to projective schemes over the ring of integers
$
 \cO _{ K } \subset K
$
and the Fourier--Mukai kernel $E$ extend to a perfect complex on their fiber product over $\cO _{ K }$
(recall that we may assume that $E$ is a perfect complex whose terms are direct sums of line bundles of the form
$
 \cO _{ X } ( i ) \boxtimes \cO _{ Y } ( i )
$
and the differentials are matrices whose entries are polynomials with coefficients in $K$).
Moreover, by similar arguments as above, one can find an open dense subset
$
 V \subset \Spec \cO _{ K }
$
over which $X$ and $Y$ extend to smooth schemes and also the Fourier--Mukai kernel $E$ yields D-equivalence between any pair of fibers.
Now, if one assumes the conjecture that D-equivalence implies the coincidence of Hasse--Weil zeta functions over finite fields, one can apply it to the reductions of $X$ and $Y$ to the closed points of $V$.
This then would imply the coincidence of the Hodge numbers in characteristic zero by \cite[Proposition 1.2]{MR2019152}.
\end{step}
\end{proof}

Now we consider the possible modifications of \pref{pb:naive}.
In the study of arithmetic and geometry of abelian varieties,
it is often convenient to work
with the category of abelian varieties up to isogeny.
Let
$
 K ' _{ 0 } \lb \Var / \bfk \rb
$
be the quotient of
$
 K _{ 0 } \lb \Var / \bfk \rb
$
by the ideal generated by
$
 [ A ] - [ B ],
$
where $A$ and $B$ are
isogenous abelian varieties.
The counting measure factors through
$
 K ' _{ 0 } \lb \Var / \bF _{ q } \rb [ \bL ^{ - 1 } ],
$
and the Hodge--Deligne polynomial factors through
the composition with the completion
$
 K ' _{ 0 } \lb \Var / \bC \rb [ \bL ^{ - 1 } ] \to \Khat ' _{ 0 } \lb \Var / \bC \rb
$
with respect to the filtration
induced by that of
$
 K _{ 0 } \lb \Var / \bfk \rb
$
in \pref{df:Grothendieck_ring}.
In view of the result by Orlov \cite{MR1921811}
that D-equivalent abelian varieties are isogenous,
it is natural to ask the following:

\begin{problem} \label{pb:modulo_isogeny}
Let $(X, Y)$ be a Fourier--Mukai pair.
Does the equality $[X] = [Y]$ hold either in 
$
 K ' _{ 0 } \lb \Var / \bfk \rb [ \bL ^{ - 1 } ]
$
or
$
 \Khat ' _{ 0 } \lb \Var / \bfk \rb?
$
\end{problem}


One could instead consider the Grothendieck ring
$
 K _{ 0 } \lb \CM _{ \bfk } \rb
$
of the
$
 \bQ
$-linear tensor category
$
 \CM _{ \bfk }
$
of Chow motives with rational coefficients over $\bfk$.
The Chow motive of a smooth projective variety $X$ will be denoted by $\frakh ( X )$.


\begin{problem}\label{pb:chow_version}
Let $(X, Y)$ be a Fourier--Mukai pair.
Does the equality
$
 \ld \frakh(X) \rd = \ld \frakh(Y) \rd
$
hold in
$
 K _{ 0 } \lb \CM _{ \bfk } \rb?
$
\end{problem}

A conjecture of Orlov \cite[Conjecture 1]{MR2225203},
which asserts that the effective numerical Chow motives of
any Fourier--Mukai pair $(X,Y)$ are isomorphic,
implies the affirmative answer to \pref{pb:chow_version}.
Here we note that the conjecture has been confirmed for some of the examples mentioned in the introduction, in a series of works by Laterveer \cite{2018arXiv180808338L,2019arXiv190104812L,MR3819748}. 

Orlov's conjecture is partly motivated by the fact,
proved in \cite{MR2225203, MR2196100, MR2641191, MR3108695},
that
for any Fourier--Mukai pair $(X,Y)$,
the images of $\frakh(X)$ and $\frakh(Y)$
in the orbit category $\CM_{\bfk} / \bZ^{\bQ(1) \otimes (-)}$
of $\CM_{\bfk}$
with respect to the Tate twist functor $\bQ(1) \otimes (-)$
are isomorphic.
This roughly means that we can recover the motive $\frakh(X)$
of a smooth projective variety $X$
from the derived category $D(X)$
up to the information of codimensions of cycles.
\pref{pb:chow_version} asks if this remaining information can also be recovered.
For the case \( \bfk = \bF _{ q } \) one should note that an affirmative answer to \pref{pb:chow_version} only implies the coincidence of the number of rational points modulo \( q \) (see the paragraph after \pref{pb:numerical_version} below), whereas an affirmative answer to \pref{pb:modulo_isogeny} implies the actual coincidence.

When $\bfk$ is of characteristic zero, there is a motivic measure
\begin{align}\label{eq:GS}
 \mu _{ \GS } \colon K _{ 0 } \lb \Var / \bfk \rb [ \bL ^{ - 1 } ] \to K _{ 0 } \lb \CM _{ \bfk } \rb
\end{align}
due to \cite{MR1409056"ü"¿'Ì'£i}
which sends the class
$
 [ X ]
$
of a smooth proper variety $X$ to
$
 [ \frakh ( X ) ]
$.
Since isogenous abelian varieties have isomorphic Chow motives
(cf.~e.g.~ \cite[Theorem 2.7.2 (c)]{MR3052734}),
it factors through a ring homomorphism
$
 K ' _{ 0 } \lb \Var / \bfk \rb [ \bL ^{ - 1 } ] \to K _{ 0 } \lb \CM _{ \bfk } \rb
$.
Hence, in characteristic 0, the affirmative answer to \pref{pb:modulo_isogeny}
implies that to \pref{pb:chow_version}.

\begin{remark}\label{rm:question_of_huybrechts}
The difference
$
 \ld A \rd - \big[ \Ahat \, \big] \in K _{ 0 } \lb \Var / \bC \rb \ld \bL ^{ - 1 } \rd
$
for an abelian variety $A$ as in \pref{lm:existence} is a non-trivial element of the kernel of
$
 \mu _{ \GS }
$,
since $A$ and $\Ahat$ are isogenous.
\end{remark}

One can also consider the category $\NM_\bfk$
of numerical motives with rational coefficients over $\bfk$.
The numerical motive of a smooth projective variety $X$
will be denoted by $\frakn(X)$.
The numerical version of \pref{pb:chow_version} is the following:

\begin{problem} \label{pb:numerical_version}
Let $(X, Y)$ be a Fourier--Mukai pair.
Does the equality $\ld \frakn(X) \rd = \ld \frakn(Y) \rd$ hold in
$
 K _{ 0 } \lb \NM _{ \bfk } \rb?
$
\end{problem}

The affirmative answer to \pref{pb:chow_version} implies that to \pref{pb:numerical_version}.
Unlike the case of Chow motives,
the equality
$
 \ld \frakn(X) \rd = \ld \frakn(Y) \rd
$
in
$
 K _{ 0 } \lb \NM _{ \bfk } \rb
$
is close to the equality
$
 \frakn(X) = \frakn(Y)
$
in
$
 \NM _{ \bfk },
$
since
$
 \NM _{ \bfk }
$
is semisimple abelian
\cite{MR1150598}.
The affirmative answer to \pref{pb:numerical_version}
for a Fourier--Mukai pair $(X,Y)$ over the finite field \( \bF _{ q } \) implies the equality of the number of rational points modulo \( q \) (see \cite[Theorem 9.1]{MR2520468}).
Similarly, modulo the standard conjectures, the affirmative answer to \pref{pb:numerical_version} for a Fourier--Mukai pair $(X,Y)$ over $\bC$ implies the equality of Hodge numbers of $X$ and $Y$ (see \cite[Section 7.1.2]{MR2115000}).

To conclude the paper,
we give an affirmative answer to \pref{pb:chow_version} (and hence to \pref{pb:numerical_version})
for the pairs discussed in \pref{cr:nonDL2}.

\begin{proposition}
If
$
 ( A, B )
$
is a pair of D-equivalent complex abelian surfaces,
then one has
$
 [ \frakh ( A ^{ [ n ] } ) ] = [ \frakh ( B ^{ [ n ] } ) ] \in K _{ 0 } \lb \CM _{ \bC } \rb
$
for any positive integer $n$.
\end{proposition}

The following argument is a slight modification of the proof of \cite[Lemma 2.1]{2018arXiv180109385O}.
\begin{proof}
As we quoted above, Orlov proved that $A$ and $B$ are isogenous and hence
$
 \frakh ( A ) \simeq \frakh ( B )
$.
To show the case
$
 n \ge 2
$,
let us consider the generating series as follows.
\begin{align}
 \bH _{ Z } ( T ) \coloneqq \sum _{ n = 0 } ^{ \infty } \ld Z ^{ [ n ] } \rd T ^{ n }
 = 1 + [ Z ] T + \cdots
 \in
 1 + T \cdot K _{ 0 } \lb \Var / \bC \rb \ldd T \rdd.
\end{align}

By \cite[COROLLARY]{gusein-zade2006}, for any smooth quasi-projective variety
$
 Z,
$
there is an equality
\begin{align}\label{eq:eq_in_Grothendieck_ring}
 \bH _{ Z } ( T ) =
 \lb \bH _{ \bA ^{ \dim Z } } ( T ) \rb ^{ \bL ^{ - \dim Z } \ld Z \rd }
 \in 1 + T \cdot K _{ 0 } \lb \Var / \bC \rb \ld \bL ^{ - 1 } \rd \ldd T \rdd
\end{align}
(see \cite{gusein-zade2006} and references therein for the notion of the power structure on
$
 K _{ 0 } \lb \Var / \bfk \rb
$).
On the other hand, since the Gillet--Soul\'e measure \eqref{eq:GS}
preserves the $\lambda$-ring structures (see \cite[Section 4.3]{MR2377891} and the proof of \cite[Proposition 4.8]{MR3437198}) and hence the power structures (see \cite[Proposition 1]{gusein-zade2006}), 
\eqref{eq:eq_in_Grothendieck_ring} implies the similar formula in the Grothendieck ring of Chow motives
\begin{align}
 \mu _{ \GS } \lb \bH _{ Z } ( T ) \rb
 &=
 \sum _{ n = 0 } ^{ \infty } \ld \frakh ( Z ^{ [ n ] } ) \rd T ^{ n } \\
 &=
 \mu _{ \GS } \lb \bH _{ \bA ^{ \dim Z } } ( T ) \rb ^{ \ld \frakh ( Z ) ( - \dim Z ) \rd }
 \in
 1 + T \cdot K _{ 0 } \lb \CM _{ \bC } \rb \ldd T \rdd,
\end{align}
where
$
 ( - )
$
is the Tate twist.
By comparing the coefficients of
$
 T ^{ n }
$
in the equality
\begin{align}
 \mu _{ \GS } \lb \bH _{ A } ( T ) \rb
 &=
 \mu _{ \GS } \lb \bH _{ \bA ^{ \dim A } } ( T ) \rb ^{ \ld \frakh ( A ) ( - \dim A ) \rd } \\
 &=
 \mu _{ \GS } \lb \bH _{ \bA ^{ \dim B } } ( T ) \rb ^{ \ld \frakh ( B ) ( - \dim B ) \rd } \\
 &=
 \mu _{ \GS } \lb \bH _{ B } ( T ) \rb,
\end{align}
we obtain the assertion.
\end{proof}

%

%


\bibliographystyle{amsalpha}
\bibliography{Ann2_DLAV}

\providecommand{\bysame}{\leavevmode\hbox to3em{\hrulefill}\thinspace}
\providecommand{\MR}{\relax\ifhmode\unskip\space\fi MR }
\providecommand{\MRhref}[2]{%
  \href{http://www.ams.org/mathscinet-getitem?mr=#1}{#2}
}
\providecommand{\href}[2]{#2}
\begin{thebibliography}{GZLMH06}

\bibitem[AC12]{MR2955193}
Dima Arinkin and Andrei C{\u{a}}ld{\u{a}}raru, \emph{When is the
  self-intersection of a subvariety a fibration?}, Adv. Math. \textbf{231}
  (2012), no.~2, 815--842. \MR{2955193}

\bibitem[ACGH85]{MR770932}
E.~Arbarello, M.~Cornalba, P.~A. Griffiths, and J.~Harris, \emph{Geometry of
  algebraic curves. {V}ol. {I}}, Grundlehren der Mathematischen Wissenschaften
  [Fundamental Principles of Mathematical Sciences], vol. 267, Springer-Verlag,
  New York, 1985. \MR{770932 (86h:14019)}

\bibitem[AKMW02]{MR1896232}
Dan Abramovich, Kalle Karu, Kenji Matsuki, and Jaros{\l}aw W{\l}odarczyk,
  \emph{Torification and factorization of birational maps}, J. Amer. Math. Soc.
  \textbf{15} (2002), no.~3, 531--572. \MR{1896232}

\bibitem[And04]{MR2115000}
Yves Andr\'{e}, \emph{Une introduction aux motifs (motifs purs, motifs mixtes,
  p\'{e}riodes)}, Panoramas et Synth{\`e}ses [Panoramas and Syntheses],
  vol.~17, Soci\'{e}t\'{e} Math\'{e}matique de France, Paris, 2004.
  \MR{2115000}

\bibitem[{Bal}19]{2019arXiv191004733B}
Gregorio {Baldi}, \emph{{Some remarks on motivical and derived invariants}},
  arXiv e-prints (2019), arXiv:1910.04733.

\bibitem[Bat98]{MR1672108}
Victor~V. Batyrev, \emph{Stringy {H}odge numbers of varieties with {G}orenstein
  canonical singularities}, Integrable systems and algebraic geometry
  ({K}obe/{K}yoto, 1997), World Sci. Publ., River Edge, NJ, 1998, pp.~1--32.
  \MR{1672108}

\bibitem[BC09]{MR2475813}
Lev Borisov and Andrei C{\u{a}}ld{\u{a}}raru, \emph{The
  {P}faffian-{G}rassmannian derived equivalence}, J. Algebraic Geom.
  \textbf{18} (2009), no.~2, 201--222. \MR{2475813}

\bibitem[BCP17]{Borisov:2017aa}
Lev~A. Borisov, Andrei Caldararu, and Alexander Perry, \emph{Intersections of
  two {G}rassmannians in $\mathbb{P}^9$}, arXiv:1707.00534, 2017.

\bibitem[BE89]{MR1038279}
Robert~J. Baston and Michael~G. Eastwood, \emph{The {P}enrose transform},
  Oxford Mathematical Monographs, The Clarendon Press, Oxford University Press,
  New York, 1989, Its interaction with representation theory, Oxford Science
  Publications. \MR{1038279 (92j:32112)}

\bibitem[Bit04]{MR2059227}
Franziska Bittner, \emph{The universal {E}uler characteristic for varieties of
  characteristic zero}, Compos. Math. \textbf{140} (2004), no.~4, 1011--1032.
  \MR{2059227}

\bibitem[BK89]{MR1039961}
A.~I. Bondal and M.~M. Kapranov, \emph{Representable functors, {S}erre
  functors, and reconstructions}, Izv. Akad. Nauk SSSR Ser. Mat. \textbf{53}
  (1989), no.~6, 1183--1205, 1337. \MR{MR1039961 (91b:14013)}

\bibitem[BL04]{MR2062673}
Christina Birkenhake and Herbert Lange, \emph{Complex abelian varieties},
  second ed., Grundlehren der Mathematischen Wissenschaften [Fundamental
  Principles of Mathematical Sciences], vol. 302, Springer-Verlag, Berlin,
  2004. \MR{2062673}

\bibitem[BLL04]{MR2051435}
Alexey~I. Bondal, Michael Larsen, and Valery~A. Lunts, \emph{Grothendieck ring
  of pretriangulated categories}, Int. Math. Res. Not. (2004), no.~29,
  1461--1495. \MR{2051435}

\bibitem[BO]{Bondal-Orlov_semiorthogonal}
A.~Bondal and D.~Orlov, \emph{Semiorthogonal decomposition for algebraic
  varieties}, arXiv:alg-geom/9506012.

\bibitem[Bor18]{Borisov:2014aa}
Lev~A. Borisov, \emph{The class of the affine line is a zero divisor in the
  {G}rothendieck ring}, J. Algebraic Geom. \textbf{27} (2018), no.~2, 203--209.
  \MR{3764275}

\bibitem[C{\u{a}}l05]{MR2141853}
Andrei C{\u{a}}ld{\u{a}}raru, \emph{The {M}ukai pairing. {II}. {T}he
  {H}ochschild-{K}ostant-{R}osenberg isomorphism}, Adv. Math. \textbf{194}
  (2005), no.~1, 34--66. \MR{2141853 (2006a:14029)}

\bibitem[Cau16]{MR3490596}
Thomas Cauwbergs, \emph{Splicing motivic zeta functions}, Rev. Mat. Complut.
  \textbf{29} (2016), no.~2, 455--483. \MR{3490596}

\bibitem[CR13]{MR3130568}
Izzet Coskun and Colleen Robles, \emph{Flexibility of {S}chubert classes},
  Differential Geom. Appl. \textbf{31} (2013), no.~6, 759--774. \MR{3130568}

\bibitem[Dan80]{MR571105}
V.~I. Danilov, \emph{Decomposition of some birational morphisms}, Izv. Akad.
  Nauk SSSR Ser. Mat. \textbf{44} (1980), no.~2, 465--477, 480. \MR{571105}

\bibitem[DL99]{MR1664700}
Jan Denef and Fran{\c{c}}ois Loeser, \emph{Germs of arcs on singular algebraic
  varieties and motivic integration}, Invent. Math. \textbf{135} (1999), no.~1,
  201--232. \MR{1664700}

\bibitem[Dol16]{Dolgachev_Milan}
Igor~V. Dolgachev, \emph{Endomorphisms of complex abelian varieties},
  \url{http://www.math.lsa.umich.edu/~idolga/MilanLect.pdf}, April 2016.

\bibitem[Efi18]{1707.08997}
Alexander~I. Efimov, \emph{Some remarks on {L}-equivalence of algebraic
  varieties}, Selecta Math. (N.S.) \textbf{24} (2018), no.~4, 3753--3762.
  \MR{3848032}

\bibitem[Eke09]{Ekedahl:2009aa}
Torsten Ekedahl, \emph{The {G}rothendieck group of algebraic stacks},
  arXiv:0903.3143, 03 2009.

\bibitem[Fal83]{MR718935}
G.~Faltings, \emph{Endlichkeitss{\"a}tze f{\"u}r abelsche {V}ariet{\"a}ten
  {\"u}ber {Z}ahlk{\"o}rpern}, Invent. Math. \textbf{73} (1983), no.~3,
  349--366. \MR{718935}

\bibitem[FW84]{MR766568}
Gerd Faltings and Gisbert W{\"u}stholz (eds.), \emph{Rational points}, Aspects
  of Mathematics, E6, Friedr. Vieweg \& Sohn, Braunschweig; distributed by
  Heyden \& Son, Inc., Philadelphia, PA, 1984, Papers from the seminar held at
  the Max-Planck-Institut f{\"u}r Mathematik, Bonn, 1983/1984. \MR{766568}

\bibitem[GS96]{MR1409056}
H.~Gillet and C.~Soul{\'e}, \emph{Descent, motives and {$K$}-theory}, J. Reine
  Angew. Math. \textbf{478} (1996), 127--176. \MR{1409056}

\bibitem[GZLMH06]{gusein-zade2006}
S.~M. Gusein-Zade, I.~Luengo, and A.~Melle-Hern{\'a}ndez, \emph{Power structure
  over the {G}rothendieck ring of varieties and generating series of {H}ilbert
  schemes of points}, Michigan Math. J. \textbf{54} (2006), no.~2, 353--359.

\bibitem[Hei07]{MR2377891}
Franziska Heinloth, \emph{A note on functional equations for zeta functions
  with values in {C}how motives}, Ann. Inst. Fourier (Grenoble) \textbf{57}
  (2007), no.~6, 1927--1945. \MR{2377891}

\bibitem[Hir64]{MR0199184}
Heisuke Hironaka, \emph{Resolution of singularities of an algebraic variety
  over a field of characteristic zero. {I}, {II}}, Ann. of Math. (2) {\bf 79}
  (1964), 109--203; ibid. (2) \textbf{79} (1964), 205--326. \MR{0199184}

\bibitem[HKR62]{MR0142598}
G.~Hochschild, Bertram Kostant, and Alex Rosenberg, \emph{Differential forms on
  regular affine algebras}, Trans. Amer. Math. Soc. \textbf{102} (1962),
  383--408. \MR{0142598 (26 \#167)}

\bibitem[HL]{Hassett:2016aa}
Brendan Hassett and Kuan-Wen Lai, \emph{Cremona transformations and derived
  equivalences of k3 surfaces}, arXiv:1612.07751 (2016).

\bibitem[HLOY03]{MR1987738}
Shinobu Hosono, Bong~H. Lian, Keiji Oguiso, and Shing-Tung Yau,
  \emph{Fourier-{M}ukai partners of a {$K3$} surface of {P}icard number one},
  Vector bundles and representation theory ({C}olumbia, {MO}, 2002), Contemp.
  Math., vol. 322, Amer. Math. Soc., Providence, RI, 2003, pp.~43--55.
  \MR{1987738}

\bibitem[HLOY04]{MR2096145}
\bysame, \emph{Fourier-{M}ukai number of a {K}3 surface}, Algebraic structures
  and moduli spaces, CRM Proc. Lecture Notes, vol.~38, Amer. Math. Soc.,
  Providence, RI, 2004, pp.~177--192. \MR{2096145}

\bibitem[Hon18]{MR3750214}
Katrina Honigs, \emph{Derived equivalence, {A}lbanese varieties, and the zeta
  functions of 3-dimensional varieties}, Proc. Amer. Math. Soc. \textbf{146}
  (2018), no.~3, 1005--1013, With an appendix by Jeffrey D. Achter, Sebastian
  Casalaina-Martin, Katrina Honigs, and Charles Vial. \MR{3750214}

\bibitem[Huy06]{MR2244106}
D.~Huybrechts, \emph{Fourier-{M}ukai transforms in algebraic geometry}, Oxford
  Mathematical Monographs, The Clarendon Press Oxford University Press, Oxford,
  2006. \MR{2244106 (2007f:14013)}

\bibitem[IIM19]{Inoue2019}
Daisuke Inoue, Atsushi Ito, and Makoto Miura, \emph{Complete intersection
  calabi--yau manifolds with respect to homogeneous vector bundles on
  grassmannians}, Mathematische Zeitschrift \textbf{292} (2019), no.~1,
  677--703.

\bibitem[IMOU]{1612.08497v1}
Atsushi Ito, Makoto Miura, Shinnosuke Okawa, and Kazushi Ueda, \emph{The class
  of the affine line is a zero divisor in the {G}rothendieck ring: via {K}3
  surfaces of degree 12}, arXiv:1612.08497v1.

\bibitem[IMOU19]{MR3912058}
\bysame, \emph{The class of the affine line is a zero divisor in the
  {G}rothendieck ring: via {$G_2$}-{G}rassmannians}, J. Algebraic Geom.
  \textbf{28} (2019), no.~2, 245--250. \MR{3912058}

\bibitem[Ito03]{MR2019152}
Tetsushi Ito, \emph{Birational smooth minimal models have equal {H}odge numbers
  in all dimensions}, Calabi-{Y}au varieties and mirror symmetry ({T}oronto,
  {ON}, 2001), Fields Inst. Commun., vol.~38, Amer. Math. Soc., Providence, RI,
  2003, pp.~183--194. \MR{2019152}

\bibitem[Jan92]{MR1150598}
Uwe Jannsen, \emph{Motives, numerical equivalence, and semi-simplicity},
  Invent. Math. \textbf{107} (1992), no.~3, 447--452. \MR{1150598}

\bibitem[Kah09]{MR2520468}
Bruno Kahn, \emph{Zeta functions and motives}, Pure Appl. Math. Q. \textbf{5}
  (2009), no.~1, 507--570. \MR{2520468}

\bibitem[Kaw18]{MR3838122}
Yujiro Kawamata, \emph{Birational geometry and derived categories}, Surveys in
  differential geometry 2017. {C}elebrating the 50th anniversary of the
  {J}ournal of {D}ifferential {G}eometry, Surv. Differ. Geom., vol.~22, Int.
  Press, Somerville, MA, 2018, pp.~291--317. \MR{3838122}

\bibitem[KKM17]{2017arXiv171206958K}
Grzegorz {Kapustka}, Micha{\l} {Kapustka}, and Riccardo {Moschetti},
  \emph{{Equivalence of K3 surfaces from Verra threefolds}}, arXiv e-prints
  (2017), arXiv:1712.06958.

\bibitem[Kon09]{MR2641191}
Maxim Kontsevich, \emph{Notes on motives in finite characteristic}, Algebra,
  arithmetic, and geometry: in honor of {Y}u. {I}. {M}anin. {V}ol. {II}, Progr.
  Math., vol. 270, Birkh\"auser Boston, Inc., Boston, MA, 2009, pp.~213--247.
  \MR{2641191}

\bibitem[KR17]{2017arXiv171110231K}
Micha{\l} {Kapustka} and Marco {Rampazzo}, \emph{{Torelli problem for
  Calabi-Yau threefolds with GLSM description}}, arXiv e-prints (2017),
  arXiv:1711.10231.

\bibitem[KS]{1612.07193v1}
Alexander Kuznetsov and Evgeny Shinder, \emph{Grothendieck ring of varieties,
  {D}- and {L}-equivalence, and families of quadrics}, arXiv:1612.07193v1.

\bibitem[KS18]{MR3848025}
Alexander Kuznetsov and Evgeny Shinder, \emph{Grothendieck ring of varieties,
  {D}- and {L}-equivalence, and families of quadrics}, Selecta Math. (N.S.)
  \textbf{24} (2018), no.~4, 3475--3500. \MR{3848025}

\bibitem[Kuz]{0610957}
Alexander Kuznetsov, \emph{Homological projective duality for {G}rassmannians
  of lines}, arXiv:math/0610957.

\bibitem[Kuz18]{1611.08386}
\bysame, \emph{Derived equivalence of {I}to-{M}iura-{O}kawa-{U}eda
  {C}alabi-{Y}au 3-folds}, J. Math. Soc. Japan \textbf{70} (2018), no.~3,
  1007--1013. \MR{3830796}

\bibitem[Lat18a]{MR3819748}
Robert Laterveer, \emph{On the motive of intersections of two {G}rassmannians
  in {$\Bbb P^9$}}, Res. Math. Sci. \textbf{5} (2018), no.~3, Paper No. 29, 24.
  \MR{3819748}

\bibitem[{Lat}18b]{2018arXiv180808338L}
Robert {Laterveer}, \emph{{On the motive of Kapustka-Rampazzo's Calabi-Yau
  threefolds}}, arXiv e-prints (2018), arXiv:1808.08338.

\bibitem[{Lat}19]{2019arXiv190104812L}
\bysame, \emph{{On the motive of Ito-Miura-Okawa-Ueda Calabi-Yau threefolds}},
  arXiv e-prints (2019), arXiv:1901.04812.

\bibitem[Lit98]{MR1628449}
P.~Littelmann, \emph{Cones, crystals, and patterns}, Transform. Groups
  \textbf{3} (1998), no.~2, 145--179. \MR{1628449}

\bibitem[Man19]{MR3936623}
Laurent Manivel, \emph{Double spinor {C}alabi-{Y}au varieties}, \'{E}pijournal
  Geom. Alg\'{e}brique \textbf{3} (2019), Art. 2, 14. \MR{3936623}

\bibitem[Mar16]{MR3535349}
Nicolas Martin, \emph{The class of the affine line is a zero divisor in the
  {G}rothendieck ring: an improvement}, C. R. Math. Acad. Sci. Paris
  \textbf{354} (2016), no.~9, 936--939. \MR{3535349}

\bibitem[MMY19]{Meachan_2019}
Ciaran Meachan, Giovanni Mongardi, and K{\=o}ta Yoshioka, \emph{Derived
  equivalent hilbert schemes of points on k3 surfaces which are not
  birational}, Mathematische Zeitschrift (2019).

\bibitem[MNP13]{MR3052734}
Jacob~P. Murre, Jan Nagel, and Chris A.~M. Peters, \emph{Lectures on the theory
  of pure motives}, University Lecture Series, vol.~61, American Mathematical
  Society, Providence, RI, 2013. \MR{3052734}

\bibitem[Muk81]{Mukai_DAPS}
Shigeru Mukai, \emph{Duality between {$D(X)$} and {$D(\hat X)$} with its
  application to {P}icard sheaves}, Nagoya Math. J. \textbf{81} (1981),
  153--175. \MR{607081 (82f:14036)}

\bibitem[Muk88]{MR977768}
\bysame, \emph{Curves, {$K3$} surfaces and {F}ano {$3$}-folds of genus {$\leq
  10$}}, Algebraic geometry and commutative algebra, {V}ol.\ {I}, Kinokuniya,
  Tokyo, 1988, pp.~357--377. \MR{977768}

\bibitem[Muk92]{MR1201388}
\bysame, \emph{Polarized {$K3$} surfaces of genus {$18$} and {$20$}}, Complex
  projective geometry ({T}rieste, 1989/{B}ergen, 1989), London Math. Soc.
  Lecture Note Ser., vol. 179, Cambridge Univ. Press, Cambridge, 1992,
  pp.~264--276. \MR{1201388 (94a:14039)}

\bibitem[Muk99]{MR1714828}
\bysame, \emph{Duality of polarized {$K3$} surfaces}, New trends in algebraic
  geometry ({W}arwick, 1996), London Math. Soc. Lecture Note Ser., vol. 264,
  Cambridge Univ. Press, Cambridge, 1999, pp.~311--326. \MR{1714828}

\bibitem[Ogu02]{MR1892313}
Keiji Oguiso, \emph{K3 surfaces via almost-primes}, Math. Res. Lett. \textbf{9}
  (2002), no.~1, 47--63. \MR{1892313}

\bibitem[{Oka}18]{2018arXiv180109385O}
Shinnosuke {Okawa}, \emph{{An example of birationally inequivalent projective
  symplectic varieties which are D-equivalent and L-equivalent}}, arXiv
  e-prints (2018), arXiv:1801.09385.

\bibitem[OR]{Ottem:2017aa}
John~Christian Ottem and J{\o}rgen~Vold Rennemo, \emph{A counterexample to the
  birational {T}orelli problem for {C}alabi-{Y}au 3-folds}, arXiv:1706.09952.

\bibitem[Orl02]{MR1921811}
D.~O. Orlov, \emph{Derived categories of coherent sheaves on abelian varieties
  and equivalences between them}, Izv. Ross. Akad. Nauk Ser. Mat. \textbf{66}
  (2002), no.~3, 131--158. \MR{1921811}

\bibitem[Orl05]{MR2225203}
\bysame, \emph{Derived categories of coherent sheaves, and motives}, Uspekhi
  Mat. Nauk \textbf{60} (2005), no.~6(366), 231--232. \MR{2225203}

\bibitem[Plo07]{MR2353249}
David Ploog, \emph{Equivariant autoequivalences for finite group actions}, Adv.
  Math. \textbf{216} (2007), no.~1, 62--74. \MR{2353249 (2009c:14095)}

\bibitem[Poo]{Poonen}
Bjorn Poonen, \emph{Mathoverflow},
  \url{http://mathoverflow.net/questions/16992/non-principally-polarized-complex-abelian-varieties}.

\bibitem[PS11]{MR2839458}
Mihnea Popa and Christian Schnell, \emph{Derived invariance of the number of
  holomorphic 1-forms and vector fields}, Ann. Sci. {\'E}c. Norm. Sup{\'e}r.
  (4) \textbf{44} (2011), no.~3, 527--536. \MR{2839458}

\bibitem[R{\o}d00]{MR1775415}
Einar~Andreas R{\o}dland, \emph{The {P}faffian {C}alabi-{Y}au, its mirror, and
  their link to the {G}rassmannian {$G(2,7)$}}, Compositio Math. \textbf{122}
  (2000), no.~2, 135--149. \MR{1775415 (2001h:14051)}

\bibitem[RT15]{MR3437198}
Niranjan Ramachandran and Gon\c{c}alo Tabuada, \emph{Exponentiable motivic
  measures}, J. Ramanujan Math. Soc. \textbf{30} (2015), no.~4, 349--360.
  \MR{3437198}

\bibitem[Swa96]{MR1390671}
Richard~G. Swan, \emph{Hochschild cohomology of quasiprojective schemes}, J.
  Pure Appl. Algebra \textbf{110} (1996), no.~1, 57--80. \MR{1390671
  (97j:19003)}

\bibitem[SZ19]{2019arXiv190701335S}
Evgeny {Shinder} and Ziyu {Zhang}, \emph{{L-equivalence for degree five
  elliptic curves, elliptic fibrations and K3 surfaces}}, arXiv e-prints
  (2019), arXiv:1907.01335.

\bibitem[Tab05]{MR2196100}
Gon\c{c}alo Tabuada, \emph{Invariants additifs de {DG}-cat\'egories}, Int.
  Math. Res. Not. (2005), no.~53, 3309--3339. \MR{2196100}

\bibitem[Tab13]{MR3108695}
\bysame, \emph{Chow motives versus noncommutative motives}, J. Noncommut. Geom.
  \textbf{7} (2013), no.~3, 767--786. \MR{3108695}

\bibitem[Tre84]{MR749015}
S.~L. Tregub, \emph{Three constructions of rationality of a cubic fourfold},
  Vestnik Moskov. Univ. Ser. I Mat. Mekh. (1984), no.~3, 8--14. \MR{749015}

\bibitem[Ued19]{MR3959275}
Kazushi Ueda, \emph{{$G_2$}-{G}rassmannians and derived equivalences},
  Manuscripta Math. \textbf{159} (2019), no.~3-4, 549--559. \MR{3959275}

\bibitem[Yek02]{MR1940241}
Amnon Yekutieli, \emph{The continuous {H}ochschild cochain complex of a
  scheme}, Canad. J. Math. \textbf{54} (2002), no.~6, 1319--1337. \MR{1940241
  (2004d:16016b)}

\end{thebibliography}

\end{document}